\documentclass[a4paper,10pt]{article}
\usepackage[utf8]{inputenc}

\usepackage{amsfonts}
\usepackage{color,enumerate,wrapfig,amssymb}
\usepackage{amsmath}
\usepackage{amsthm}
\usepackage{esint}
\usepackage{mathrsfs}

\newtheorem{theorem}{Theorem}
\newtheorem{lemma}{Lemma}
\newtheorem{proposition}{Proposition}
\newtheorem{corollary}{Corollary}

\newtheorem{example}{Example}
\newtheorem{definition}{Definition}


\begin{document}

\title{Optimal regularity in the optimal switching problem}

\author{Gohar Aleksanyan}

\maketitle

\begin{abstract}
 
 In this article we study the optimal regularity for solutions to the following weakly coupled system
 with interconnected obstacles
 \begin{equation*}
\begin{cases}
\min (-\Delta u^1+f^1, u^1-u^2+\psi^1)=0 \\
\min (-\Delta u^2+f^2, u^2-u^1+\psi^2)=0,
\end{cases}
\end{equation*}
 arising in the optimal switching problem with two modes.

\par
We derive the optimal $C^{1,1}$-regularity for the minimal solution under the assumption that
the zero loop set $\mathscr{L}:= \{ \psi^1+\psi^2=0\}$ is the closure of its interior. 
This result is optimal and we provide a counterexample showing that the $C^{1,1}$-regularity does not hold 
without the assumption $\mathscr{L} = \overline{ \mathscr{L}^0} $.

\end{abstract}

\section{Introduction}
We consider the following system of weakly coupled equations of obstacle type
\begin{equation}  \label{system2}
\begin{cases}
\min (-\Delta u^1+f^1, u^1-u^2+\psi^1)=0 \\
\min (-\Delta u^2+f^2, u^2-u^1+\psi^2)=0,
\end{cases}
\end{equation}
with given Dirichlet boundary conditions $ u^i=g^i \textrm{ on } \partial\Omega$.
These type of systems arise in optimal switching problems with two switching modes.
Here $f^1$ and $f^2 $ are the running cost functions corresponding to the switching modes.
The functions $\psi^1$ and $\psi^2$ are the costs of switching 
from one mode to the other. More details on the optimal switching problem are provided in Section 2.1.

\par
The uniqueness and $C^{1,1}$-regularity of the solutions to such systems 
have been studied in the literature under the assumption 
that the switching costs are nonnegative constants, \cite{EF79}, \cite{LB83}, \cite{BL86}. 
In the paper \cite{CGT13}
the regularity of the solutions to an obstacle type weakly coupled system with first 
order Hamiltonians
is studied using adjoint methods under the assumption that 
each of the switching costs 
is bounded from below by a positive constant.

\par
In our paper we make only the nonnegative loop assumption. This is a necessary 
condition for the system to be well-defined. Indeed, let $(u^1, u^2)$ be a solution to \eqref{system2}, then
$u^1-u^2+\psi^1 \geq 0$ and $u^2-u^1+\psi^2 \geq 0 $, which implies
\begin{equation} \label{loop}
 \psi^1 (x) + \psi^2 (x) \geq 0.
\end{equation}

\par
In the optimal switching setting, the condition \eqref{loop}
prevents the agent from making arbitrary gains by looping, in the sense that
$\psi^1 (x) + \psi^2 (x)$
is the cost of switching from one mode to the other and immediately
switching back. We denote the set where it is possible to switch for 
free by 
\begin{equation*}
  \mathscr{L}=\{x \in \Omega \mid \psi^1 (x) + \psi^2 (x) = 0\},
\end{equation*}
and call it free switching or zero loop set.

\par
By using the penalization/regularization method we derive the existence of solutions,
showing that through a subsequence the solutions of the penalized system converge to the minimal solution
$ ( u^1_0,u^2_0)$ to \eqref{system2}.
Then we see that the solution $u^i_0 \in C^{1,\gamma}$,
for every $0<\gamma <1$ and
\begin{equation} \label{bddlap}
 \lVert \Delta u^i_0 \rVert_{L^\infty(\Omega)} \leq 
 \max_i \Arrowvert \Delta \psi^i \rVert_{L^{\infty}(\Omega)}+
 3 \max_i \Arrowvert f^i \rVert_{L^{\infty}(\Omega)}.
\end{equation}

\par
The aim of the paper is to investigate if the solutions are $C^{1,1}$, which is the best regularity
that we can hope that the solutions achieve. The structure of our system shows that at some subdomains of $\Omega$, 
the regularity of the solutions can be derived by already known $C^{1,1}$-regularity results for the
obstacle problem. In our discussion we see that the main point is to describe the regularity 
at so called meeting points lying on $\partial \mathscr{L}$, the boundary of the zero loop set.

\par
In the main theorem, Theorem 4, we show that at the meeting points  $x_0 \in \partial \mathscr{L}^0 \cap \Omega$
the solutions are $C^{2, \alpha}$, 
under the assumption that $f^i \in C^\alpha $ and $\psi^i \in C^{2, \alpha}$. 
By $ \mathscr{L}^0$ we denote the interior of the set $ \mathscr{L}$, and
by pointwise $C^{2,\alpha}$
regularity we mean uniform approximation with a second order polynomial with the speed $r^{2+\alpha}$.

\par
The idea of the proof is the same as in deriving the optimal regularity for the no-sign obstacle problem
in  \cite{ALS13}.
The proof is based on the $BMO$-estimates for $D^2 u^1_0$ and $D^2 u^2_0$ following from 
the estimate  \eqref{bddlap}. At the point $x_0$, we consider $r^{2+\alpha}$-th order rescalings
of $ u^i_0$ denoted by $v^i_r$, 
and show that these are uniformly bounded in $W^{2,2}(B_1)$. Then, looking at the corresponding system 
for $ (v^1_r, v^2_r)$,
we conclude that the rescalings
are uniformly bounded in the ball $B_1$.

\par 
In the end we justify our assumption $0\in \partial \mathscr{L}^0 $ with a counterexample:
We consider a particular system in $\mathbb{ R }^2$,
where the zero loop set $\mathscr{L}= \{0\}$, then we find an explicit solution,
that is not $C^{1,1}$.

\par
The paper is structured as follows: In Section 2 we provide some background material.
In Section 3 we use the penalization method to derive the existence of strong solutions, and
observe that these are  actually minimal solutions.
The main results are presented in the last section, where we prove that the minimal solution is locally
$C^{1,1}$ if  the zero loop set is the closure of its interior, and provide a
counterexample to $C^{1,1}$-regularity when $\psi^1+\psi^2 $ has an isolated zero.

\begin{subsection}{Acknowledgements}
 I would like to thank Prof John Andersson, Prof Diogo Gomes and Prof Henrik Shahgholian
 for several fruitful discussions. In particular, I would like to thank John Andersson
 for the idea of the proof of $C^{2, \alpha}$-regularity at the meeting points.
\end{subsection}

\begin{section}{Background material}
In this section we state some known results, which we use in our discussion, without 
giving any proofs.

\begin{subsection}{Optimal switching problem}
Let $\Omega\subset \mathbb{R}^n$ be a bounded domain with a smooth boundary.
We consider an agent that can be anywhere in $\Omega$ and in one of
a finite number $m$ of states. 
For every $1\leq i\leq m $, the agent moves in $\Omega$ according to a diffusion
\begin{equation*}
dx=b_i(x) dt +\sigma_i(x) dW_t,
\end{equation*}
where $W_t$ is a Brownian motion in a suitable probability space, $b_i:\Omega\to \mathbb{R}^n$ and
$\sigma_i: \Omega\to \mathbb{R}^{n\times m}$ are smooth functions. The generator of the diffusions
is denoted by
$ L^iv=\frac{1}{2}  \sigma_i \sigma^T_i:D^2 v+b_i \cdot Dv $.

\par
The agent can switch from any diffusion mode to another. 
At every instant $t $ the agent pays a running cost $f^{i(t)}(x)$, depending on the present state $i(t)$
and position $x$. Additionally, when changing state $i$ to state $j$ he incurs in a switching cost $-\psi^{ij}(x)$. 
Finally, when the diffusion reaches the boundary and the agent is in state $i$,
the process is stopped and a cost $-g^i(x)$ is incurred.
As it is traditional in optimal switching setting, we consider the problem of maximizing a
certain profit (the negative of the cost) functional
\begin{align*}
u_i(x)=&\max_{i(t), i(0)=i}
E \Big[\int_0^{T_{\partial \Omega}} f_{i(t)}(x(t))dt \\
&-\sum_{t\leq T_{\partial \Omega}} \psi_{i(t^-),i(t^+)}(x(t))+
g_{i(T_{\partial \Omega})}(x(T_{\partial \Omega})\Big], 
\end{align*}
where 
$T_{\partial \Omega}$ denotes the exit time  of $\Omega$. Additionally, 
the convention  
$\psi_{ii}=0$ is assumed.

\par
As it has been discussed in the literature  \cite{LB83}, \cite{BL86}, the corresponding value function $u^i$
solves the following
system: 
\begin{equation} \label{ms}
\min_{i} (-L^i u^i+f^i, \min_j (u^i-u^j+\psi^{ij}))=0. 
\end{equation}
with boundary conditions $u^i=g^i \textrm{ on } \partial \Omega$.

\par
For the optimal switching problem to be well defined, we need to impose the
nonnegative loop condition: Let 
 $i_{0},i_{1},\hdots , i_{l}=i_{0}$ be any loop of length $l$, i.e. including $l$ number of states.
 Assume that $ (u^1, u^2,...,u^m)$ is a solution to system \eqref{ms}, then 
 $ u^i-u^j+\psi^{ij} \geq 0 $ for any $ i,j \in \{ 1,2,..., m \}$, then after summing the equations
 over the loop, we get
\begin{equation*}
\sum_{j=1}^{l}\psi _{i_{j-1},i_{j}}\geq 0.
\end{equation*}
This condition is a 
necessary assumption for the existence of a solution to (3), and 
it prevents the agent from making arbitrary gains by looping.

\par
In this paper we consider a system, arising in a model optimal switching problem with only two states.
\end{subsection}

\begin{subsection}{The Poisson equation, Calderon-Zygmund estimates}

\par
We start by recalling the definition of the H\"{o}lder space $C^{k,\gamma}$.
Let us denote the continuity norm 
\begin{equation*}
 \lVert u \rVert _{C(\overline{ \Omega})}= \sup_{x\in \Omega }\lvert u(x)\rvert,
\end{equation*}
and the H\"{o}lder seminorm 
\begin{equation*}
 [ u ] _{C^{0,\gamma}(\overline{ \Omega })}= \sup_{x,y\in \Omega, x \neq y}
 \frac{ \lvert u(x)-u(y)\rvert}{ \lvert x-y \rvert^\gamma}.
\end{equation*}

\begin{definition}
 The H\"{o}lder space $C^{k,\gamma}(\Omega)$ consists of all functions $u\in C^k(\overline{\Omega})$
 such that 
\begin{equation*}
  \Arrowvert u \rVert_{C^{k,\gamma}(\Omega)}:= \sum_{\lvert \alpha \rvert \leq k}
   \Arrowvert D^\alpha u \rVert_{C(\overline{ \Omega })}+
   \sum_{\lvert \alpha \rvert = k}
   [ D^\alpha u ]_{C^{0,\gamma}(\overline{ \Omega })} < \infty.
\end{equation*}
\end{definition}

The next theorem states the known regularity of the solutions to the Poisson equation $\Delta u =f$,
under the assumption that $f$ is H\"{o}lder continuous, and can be found in the book
\cite{GT98} .

\begin{theorem}
 Assume that $f\in C^{\gamma} $, then there exists a classical solution to the 
 Poisson equation 
 \begin{equation*}
  \Delta u= f  \textrm{ in } \Omega.
 \end{equation*}
Moreover, the solution is locally $ C^{2,\gamma}(\Omega)$, and for every $ \Omega' \Subset \Omega$
 \begin{equation*}
\Arrowvert u \rVert_{C^{2,\gamma}(\Omega')} \leq
C_{n,\gamma}(\Omega') \left( \lVert u \rVert_{C(\overline{ \Omega})} + 
\Arrowvert f \rVert_{C^{0,\gamma}(\Omega)} \right),
 \end{equation*}
 where the constant $C_{n,\gamma}(\Omega') $ depends on $\textrm{diam} \Omega'$ and
 $ \textrm{dist}(\Omega', \partial \Omega) $.
\end{theorem}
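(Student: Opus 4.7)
The plan is to separate the theorem into two parts: existence of a classical solution with prescribed data $f$, and the interior $C^{2,\gamma}$ estimate on compactly contained subdomains. The estimate is local, so I would first prove the regularity bound assuming a classical solution exists, and then obtain existence by approximation with smooth data together with this bound.

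For the regularity part, the idea is to reduce the problem to estimating the Newtonian potential of a compactly supported H\"older function. Given $\Omega' \Subset \Omega$, pick an intermediate domain $\Omega''$ with $\Omega' \Subset \Omega'' \Subset \Omega$ and a cutoff $\eta \in C_c^\infty(\Omega)$ with $\eta \equiv 1$ on $\Omega''$. Setting $\tilde f = \eta f$ (extended by zero to $\mathbb{R}^n$) and $w = N \ast \tilde f$, where $N$ is the fundamental solution of $-\Delta$, we get $\Delta w = \tilde f = f$ on $\Omega''$, hence $h := u - w$ is harmonic on $\Omega''$. By the mean value property and standard interior estimates for harmonic functions, $\|h\|_{C^{2,\gamma}(\Omega')}$ is controlled by $\|h\|_{C(\overline{\Omega''})}$, which in turn is dominated by $\|u\|_{C(\overline{\Omega})} + \|w\|_{C(\overline{\Omega''})}$. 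Thus it suffices to prove $\|w\|_{C^{2,\gamma}(\mathbb{R}^n)} \lesssim \|\tilde f\|_{C^{0,\gamma}(\mathbb{R}^n)}$, with constant depending only on $n,\gamma$ and the support of $\tilde f$.

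The heart of the argument is the pointwise H\"older estimate for $D_{ij} w$. Using the cancellation $\int_{\partial B_r} D_{ij} N \, d\sigma = 0$, one rewrites
\begin{equation*}
D_{ij} w(x) = \int_{B_R(x)} D_{ij} N(x-y)\bigl(\tilde f(y) - \tilde f(x)\bigr)\, dy - \tilde f(x)\!\int_{\partial B_R(x)} D_i N(x-y)\,\nu_j\, d\sigma(y),
\end{equation*}
for any $R$ large enough that $\mathrm{supp}\,\tilde f \subset B_R(x)$. For two points $x_1,x_2$ with $|x_1-x_2|=r$, I split the integration into $B_{2r}(x_1)$ and its complement. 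On the small ball, the H\"older exponent of $\tilde f$ absorbs the $|z|^{-n}$ singularity of $D^2 N$ and gives a contribution of order $r^\gamma$; on the outer region, one uses the mean value theorem on $D_{ij} N$ (which gives a $|z|^{-n-1}$ decay) combined with $|\tilde f(y)-\tilde f(x_1)| \le [\tilde f]_{C^\gamma}|y-x_1|^\gamma$ to produce an $r^\gamma$ bound as well. This is a delicate but classical singular integral computation, and it is here that all constants depending on $\mathrm{diam}\,\Omega'$ and $\mathrm{dist}(\Omega',\partial\Omega)$ enter, via the choice of $\eta$.

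Existence is then obtained by mollifying: let $f_\varepsilon = f \ast \rho_\varepsilon$, smooth in $\Omega$; solve $\Delta u_\varepsilon = f_\varepsilon$ in $\Omega$ with $u_\varepsilon = 0$ on $\partial \Omega$ by standard Perron/variational methods since $f_\varepsilon$ is smooth. The estimate just derived applied to each $u_\varepsilon$ on a fixed $\Omega'$ gives uniform $C^{2,\gamma}(\Omega')$ bounds, and Arzel\`a--Ascoli together with a diagonal argument yields a classical limit $u$ with $\Delta u = f$ and the required regularity. The main obstacle throughout is the singular integral estimate in the previous paragraph; once the cancellation of $\int_{\partial B_r} D^2 N$ has been exploited and the two-region splitting handled carefully, the rest is soft.
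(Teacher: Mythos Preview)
The paper does not prove this theorem at all: it is stated in Section~2.2 as background material, with a reference to Gilbarg--Trudinger \cite{GT98} in lieu of a proof. Your sketch is precisely the potential-theoretic argument one finds there (Chapter~4): reduce to the Newtonian potential of a compactly supported H\"older function, exploit the cancellation $\int_{\partial B_r} D_{ij}N\,d\sigma = 0$ to make sense of $D_{ij}w$, and estimate the H\"older seminorm by a near/far splitting. So there is nothing to compare against in the paper itself, and your approach is aligned with the cited source.

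Two minor remarks. First, watch the sign convention: if $N$ is the fundamental solution of $-\Delta$ then $\Delta(N\ast\tilde f) = -\tilde f$, not $\tilde f$; this is harmless but should be consistent. Second, for existence you solve a Dirichlet problem, which tacitly uses regularity of $\partial\Omega$; in the paper's setting $\Omega$ has smooth boundary so this is fine, but note that the existence statement alone needs no boundary data at all --- the Newtonian potential of any $C^\gamma$ cutoff of $f$ already furnishes a classical solution by the very estimate you prove, so the mollification and Dirichlet step can be bypassed.
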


\par
Next let us recall the definition of $BMO $ spaces, and then state the Calderon-Zygmund estimates
for the Poisson equation $\Delta u =f$, when $f\in L^p$, $1<p \leq \infty$.

\begin{definition}
 We say that a function $u\in L^2(\Omega)$ is in $BMO(\Omega)$ if
 \begin{equation*}
  \lVert f \rVert^2_{BMO(\Omega)}:=
  \sup_{x\in U, r>0} \frac{1}{r^n} \int_{B_r(x) \cap \Omega } \lvert f(y)-(f)_{r,x}\rvert^2 dy
  +\lVert f \rVert^2_{L^2(\Omega)}<\infty,
 \end{equation*}
where $(f)_{r,x}$ is the average of $f$ in $ B_r(x) \cap \Omega$.
\end{definition}

\par
The proofs of the following results can be found in \cite{GT98} when $p<\infty$
and in \cite{S93} when $p=\infty$.

\begin{theorem}
 Consider the equation 
 \begin{equation*}
  \Delta u= f  \textrm{ in } B_{2R}.
 \end{equation*}
 If $f \in L^p(B_{2R})$ for $ 1< p <\infty $, then the solution $u\in W^{2,p}(B_R)$, and
 \begin{equation*}
  \lVert D^2 u\rVert_{L^p(B_R)}\leq C_{p,n} \left( \lVert f \rVert_{L^p(B_{2R})} 
  +  \lVert u\rVert_{L^1(B_{2R})} \right)
 \end{equation*}
 If $f \in L^\infty (B_{2R})$, then in general $ u \notin W^{2,\infty}(B_R)$, but
\begin{equation*}
  \lVert D^2 u\rVert_{BMO(B_R)}\leq C_{\infty,n} \left( \lVert f \rVert_{L^\infty(B_{2R})} 
  +  \lVert u\rVert_{L^1(B_{2R})} \right),
 \end{equation*}
here $ C_{p,n}$, $ C_{\infty,n} $ are dimensional constants.
\end{theorem}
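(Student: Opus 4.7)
Since this is the classical Calder\'on--Zygmund inequality for $-\Delta$, my plan is to pass through the Newtonian potential representation and reduce the second derivative estimates to the boundedness of singular integral operators of convolution type. First I would localize by introducing a smooth cutoff $\eta$ with $\eta \equiv 1$ on $B_R$ and $\mathrm{supp}\,\eta \subset B_{3R/2}$, and set $w := \eta u$. Then $w$ is compactly supported in $\mathbb{R}^n$ and satisfies
\begin{equation*}
\Delta w = \eta f + 2\nabla\eta \cdot \nabla u + u\,\Delta\eta =: F
\end{equation*}
so that, writing $\Gamma$ for the fundamental solution of the Laplacian, $w = \Gamma * F$ on all of $\mathbb{R}^n$.

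Next I would differentiate twice and obtain, for $i,j$ fixed,
\begin{equation*}
\partial_{ij} w(x) = \mathrm{p.v.}\int_{\mathbb{R}^n} K_{ij}(x-y)\,F(y)\,dy + c_{ij}\,F(x),
\end{equation*}
where $K_{ij}(x) = \partial_{ij}\Gamma(x)$ is a standard Calder\'on--Zygmund kernel: homogeneous of degree $-n$, smooth away from the origin, and with vanishing mean on spheres. The core analytic input is then the Calder\'on--Zygmund theorem: such operators are bounded on $L^p$ for every $1 < p < \infty$ and map $L^\infty$ into $BMO$. This gives $\lVert \partial_{ij}w\rVert_{L^p(\mathbb{R}^n)} \leq C_{p,n}\lVert F\rVert_{L^p(\mathbb{R}^n)}$ and the analogous $BMO$ bound at the endpoint.

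It then remains to absorb the cutoff terms $2\nabla\eta\cdot\nabla u + u\,\Delta\eta$, which are supported in the annulus $B_{3R/2}\setminus B_R$ and involve $\nabla u$ and $u$. For these I would use the standard interpolation inequality, which on a slightly larger ball $B_{7R/4}$ reads
\begin{equation*}
\lVert \nabla u\rVert_{L^p} \leq \varepsilon\,\lVert D^2 u\rVert_{L^p} + C_\varepsilon\,\lVert u\rVert_{L^1(B_{2R})},
\end{equation*}
and then perform a hole-filling / iteration argument on nested balls $B_{R_k}$ with $R_k \nearrow R'$ to absorb the $\varepsilon \lVert D^2 u\rVert_{L^p}$ term into the left-hand side. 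The $p=\infty$ case is identical in structure, with $L^p$ replaced by $BMO$ on the left and $L^\infty$ on the right; one uses that singular integral operators are bounded $L^\infty \to BMO$ but not $L^\infty \to L^\infty$, which is exactly why the statement must be phrased in $BMO$ at the endpoint.

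The main obstacle is the Calder\'on--Zygmund boundedness itself, in particular the $L^\infty \to BMO$ endpoint: the bulk of any self-contained proof lies in the harmonic-analytic machinery (Plancherel for the $L^2$ case via the Fourier multiplier $\xi_i\xi_j/|\xi|^2$, the Calder\'on--Zygmund decomposition and Marcinkiewicz interpolation for $p\neq 2$, and a sharp-maximal-function / John--Nirenberg argument for the endpoint). The localization and the absorption of the cutoff commutator terms are routine in comparison. I would follow Chapter~9 of \cite{GT98} for the case $1<p<\infty$ and the Stein monograph \cite{S93} for the $BMO$ endpoint.
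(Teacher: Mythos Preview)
Your sketch is correct and is exactly the standard route one finds in the references the paper itself invokes. In fact the paper does not prove this theorem at all: it is stated as background material in Section~2.2, with the sentence ``The proofs of the following results can be found in \cite{GT98} when $p<\infty$ and in \cite{S93} when $p=\infty$.'' Your outline (cutoff localization, Newtonian potential representation, reduction to Calder\'on--Zygmund singular integrals, Marcinkiewicz interpolation for $1<p<\infty$, and the $L^\infty\to BMO$ endpoint) is precisely the argument in those sources, and you even cite the same two references. There is nothing to compare: the paper defers to the literature, and you have sketched what that literature contains.
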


\end{subsection}

\begin{subsection}{The obstacle problem} 

In this section we state the regularity of the solution to the following obstacle problem,
\begin{equation*}
 \min (-\Delta u+f, u-\psi)= 0 \textrm{ in } \Omega
\end{equation*}
with boundary conditions $u-g\in W_0 ^{1,2}(\Omega)$.

\par
Here we will omit the variational formulation of the problem, the first
regularity results and will state the $ C^{1,1}$-regularity of the solutions 
referring to the book \cite{PSU12}.

\par
In order to be consistent with the assumptions in our paper, we will 
assume that $f\in C^\alpha$ and the obstacle  $\psi \in C^{2,\alpha}$,
although these assumptions can be weekened.

\begin{theorem}
Assume that $f\in C^\alpha$ and $\psi \in C^{2,\alpha}$, and 
 $ u $ solves the obstacle problem
 \begin{equation*}
 \min (-\Delta u+f, u-\psi)= 0 \textrm{  a.e. in } \Omega.
\end{equation*}
 Then $ u \in C^{1,1}(\Omega')$ for every $ \Omega' \Subset \Omega$, and
 \begin{equation*}
  \lVert u \rVert_{C^{1,1}(\Omega')}\leq C\left( \lVert u \rVert_{L^\infty(\Omega)}+
  \lVert f \rVert_{C^{0,\alpha}(\Omega)}+ \lVert \psi \rVert_{C^{2,\alpha}(\Omega)}\right),
 \end{equation*}
where the constant $C$ depends on the dimension and on the 
subset $ \Omega' \Subset \Omega$.
 
\end{theorem}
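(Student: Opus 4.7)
The plan is to reduce to a zero-obstacle problem, bound the Laplacian pointwise, and then upgrade the $BMO$-estimate of Theorem 2 to an $L^\infty$-estimate on $D^2 u$ by combining quadratic growth of the solution at the free boundary with interior Schauder estimates (Theorem 1).

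Set $w := u - \psi$ and $\tilde f := f - \Delta\psi \in C^\alpha(\Omega)$. Since $u - \psi \geq 0$ and $-\Delta u + f \geq 0$ with $(u-\psi)(-\Delta u + f) = 0$ a.e., the function $w$ satisfies $\min(-\Delta w + \tilde f,\, w) = 0$ a.e.\ in $\Omega$. On $\{w>0\}$ the equation gives $\Delta w = \tilde f$, while at a.e.\ point of $\{w=0\}$ both $\nabla w$ and $D^2 w$ vanish in the Sobolev sense, since $w\geq 0$ attains its minimum there. Therefore $\Delta w = \tilde f\,\chi_{\{w>0\}}$ a.e., so $\|\Delta w\|_{L^\infty} \leq \|\tilde f\|_{L^\infty}$, and Theorem 2 yields $w \in W^{2,p}_{\mathrm{loc}}$ for every $p < \infty$ together with $D^2 w \in BMO_{\mathrm{loc}}$; the task is to replace $BMO$ by $L^\infty$.

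The main step, and the hardest point, is a quadratic growth estimate: for every free boundary point $x_0 \in \partial\{w>0\} \cap \Omega'$ and every sufficiently small $r$,
\begin{equation*}
\sup_{B_r(x_0)} w \,\leq\, C\, r^2,
\end{equation*}
with $C$ depending only on $n$, $\|\tilde f\|_{L^\infty}$ and $\|w\|_{L^\infty}$. I would prove this by contradiction: if it fails, there exist sequences $x_k \in \partial\{w>0\}$, $r_k \downarrow 0$, $M_k \uparrow \infty$ with $\sup_{B_{r_k}(x_k)} w = M_k r_k^2$; the rescalings $w_k(y) := w(x_k + r_k y)/(M_k r_k^2)$ then satisfy $w_k \geq 0$, $w_k(0) = 0$, $\sup_{B_1} w_k = 1$, and $\|\Delta w_k\|_{L^\infty(B_1)} \leq \|\tilde f\|_{L^\infty}/M_k \to 0$. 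Theorem 2 combined with Morrey embedding provides uniform $C^{1,\beta}$-bounds on the $w_k$, so along a subsequence $w_k \to w_\infty$ in $C^1_{\mathrm{loc}}(B_1)$ where $w_\infty$ is nonnegative, harmonic, and vanishes at the interior point $0$; the strong maximum principle forces $w_\infty \equiv 0$, contradicting $\sup_{B_1} w_\infty = 1$.

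Once quadratic growth is available, the $C^{1,1}$-bound follows by a scaled Schauder argument. For $x \in \Omega' \cap \{w>0\}$ let $d := \mathrm{dist}(x, \partial\{w>0\})$ and pick a closest free boundary point $x_0$; then $B_d(x) \subset \{w>0\} \cap B_{2d}(x_0)$, and the rescaling $v(y) := w(x+dy)/d^2$ solves $\Delta v = F$ on $B_1$ with both $\|F\|_{C^\alpha(B_1)}$ and $\|v\|_{L^\infty(B_1)}$ controlled uniformly, the latter via quadratic growth at $x_0$. Interior Schauder (Theorem 1) then bounds $\|D^2 v\|_{L^\infty(B_{1/2})}$ uniformly, i.e.\ $\|D^2 w\|_{L^\infty(B_{d/2}(x))} \leq C$ independently of $d$. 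Since $D^2 w = 0$ a.e.\ on $\{w=0\}$, this yields a uniform pointwise bound for $D^2 w$ on $\Omega'$, hence $w \in C^{1,1}(\Omega')$ with the stated quantitative estimate; the conclusion transfers to $u = w + \psi$ using $\psi \in C^{2,\alpha}$.
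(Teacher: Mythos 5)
First, note that the paper itself does not prove this statement: Theorem 3 is quoted as background from the reference [PSU12], so there is no internal proof to compare against. Your route --- reduce to $w:=u-\psi\ge 0$ with $\min(-\Delta w+\tilde f,w)=0$, deduce $\Delta w=\tilde f\,\chi_{\{w>0\}}\in L^\infty$, prove quadratic growth of $w$ at free boundary points, and then combine scaled interior Schauder estimates in the largest ball contained in $\{w>0\}$ with $D^2w=0$ a.e.\ on $\{w=0\}$ --- is the standard textbook strategy and essentially the one of the cited reference. The reduction and the final scaling step are fine, modulo routine care with the dichotomy ``$d$ small versus $d$ comparable to $\mathrm{dist}(\Omega',\partial\Omega)$'' and the implicit standing assumption $u\in W^{2,p}_{\mathrm{loc}}$, which is what makes ``a.e.'' statements about $\Delta w$ and ``$D^2w=0$ a.e.\ on $\{w=0\}$'' meaningful (in the paper's application this regularity comes from the penalization construction).

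The genuine gap is in your compactness proof of the quadratic growth estimate, at the final contradiction. You normalize $\sup_{B_1}w_k=1$, but Theorem 2 requires a strictly larger ball, so the Calder\'on--Zygmund/Morrey bounds give equicontinuity only on compact subsets of $B_1$; hence $w_k\to w_\infty$ only in $C^1_{\mathrm{loc}}(B_1)$, the suprema of $w_k$ may concentrate near $\partial B_1$, and the assertion $\sup_{B_1}w_\infty=1$ does not follow. Knowing $w_\infty\equiv 0$ on compact subsets is therefore not yet a contradiction. The standard repair is to build a doubling property into the choice of radii: pick $r_k$ so that $\sup_{B_{r_k}(x_k)}w\ge k\,r_k^2$ while $\sup_{B_{r}(x_k)}w\le k\,r^2$ for all $r\in[r_k,d_0]$ --- exactly the device the paper uses in its own Proposition 3 for the $r^{2+\alpha}$-growth of $S(r)$ --- which gives $\sup_{B_R}w_k\le R^2$ for $1\le R\le d_0/r_k$, convergence in $C^1$ on $\overline{B_1}$ (indeed on all compact sets of $\mathbb{R}^n$), and preserves the normalization. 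Alternatively, drop compactness altogether: in $B_r(x_0)$ write $w=h+v$ with $h$ harmonic, $h=w$ on $\partial B_r$, so that $\lvert v\rvert\le C_n\lVert\tilde f\rVert_{L^\infty}r^2$; then $h+C_n\lVert\tilde f\rVert_{L^\infty}r^2\ge 0$, $h(x_0)\le C_n\lVert\tilde f\rVert_{L^\infty}r^2$, and Harnack's inequality yields $\sup_{B_{r/2}(x_0)}w\le C_n\lVert\tilde f\rVert_{L^\infty}r^2$. The Harnack route has the further advantage of producing a constant depending only on $n$ and $\lVert f\rVert_{C^{0,\alpha}}+\lVert\psi\rVert_{C^{2,\alpha}}$, which you need for the quantitative estimate in the statement: as you set it up, for a single fixed $w$, the contradiction argument only produces a constant depending on $w$ itself rather than on the norms in the statement (this can be fixed by running the argument over the whole class of solutions with uniformly bounded data, but then it must be phrased that way).
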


\end{subsection}

\end{section}

\section{Existence of $C^{1,\alpha}$ solutions}

\par
We consider the system \eqref{system2} with boundary conditions $ u^i=g^i \textrm{ on } \partial \Omega $,
$g^i \in C^2$.
Then we also need to impose the following compatibility condition on the boundary data:
\begin{equation} \label{bc}
 g^1-g^2+\psi^1 \geq 0, \textrm{ and }  g^2-g^1+\psi^2 \geq 0 \textrm{ on } \partial \Omega.
\end{equation}
Clearly, without the compatibility conditions, there are no solutions to \eqref{system2}
achieving the boundary data.

\par
We are interested in deriving $C^{1,1}$-regularity for the solutions to our system,
which is the best regularity one can expect.
Throughout our discussion we will assume that 
\begin{equation} \label{costass}
 f^1, f^2 \in  C^\alpha (\Omega), \textrm{ and } \psi^1, \psi^2 \in  C^{2,\alpha} (\Omega),
\end{equation}
for some $0 <\alpha <1$.
These are natural assumptions, since $f$ being bounded or continuous, 
is not enough for its Newtonian potential to be $C^{1,1}$. 
We also  provide a one-dimensional counterexample to the existence of solutions in case
the switching costs are not smooth.

\begin{example}[Diogo Gomes]
Consider the following system in the interval $(-1,1)$ with zero Dirichlet boundary conditions,
\begin{equation*}  
\begin{cases}
\min\left(-(u_{1})_{xx}, u_1-u_2+(1-|x|) \cos\left( \frac \pi
{1-|x|}\right)\right)=0 , \\
\min\left(-(u_2)_{xx}, u_2-u_1+(1-|x|)(1- \cos\left( \frac \pi
{1-|x|}\right)\right)=0.
\end{cases}
\end{equation*}
 Then the value function
of the corresponding optimal control problem is not finite.
\end{example}

\begin{proof}
In our example the running costs are identically zero, the switching costs
satisfy the nonnegative loop assumption $ \psi_{1}(x)+\psi_{2}(x)>0 $ in $ (-1,1)$, and the
compatibility condition on the boundary  $ \psi^{1}(\pm1)=\psi^{2}(\pm1)=0 $.

\par
The example illustrates that when the switching costs are not smooth, then the negative values give infinity
growth to the value function of the corresponding optimal control problem.
In order to show this, we choose optimal controls $ i(t)$ as follows: the switching occurs at times $ t_{k} $ where
$ \frac \pi {1-|x(t_{k})|}=\pi k $: When $ \frac \pi {1-|x(t_{k})|}=\pi k = \pi (2n+1) $, $n\in \mathbb{N}_0$ ,
we switch from
regime 1 to regime 2 gaining $ \frac 1 {2n+1} $ and for the values $ \frac \pi {1-|x(t_{k})|}=\pi k = 2\pi n $
we switch back from regime 2 to 1 paying zero cost, and so
\begin{equation*}
u_i(x)\geq - \sum_{0\leq t\leq T_{\partial \Omega}} \psi_{i(t_k),
i(t_{k+1})}(x(t)) = \sum \frac{1}{2n+1}.
\end{equation*}
Then the conclusion follows from the divergence of harmonic series.
 
\end{proof}

\subsection{Penalization method}

In this section we approximate the system \eqref{system2} with a smooth penalized system.
Let us take any smooth nonpositive function $\beta : \mathbb{R} \rightarrow (-\infty,0]$,
such that 
\begin{align*}
 \beta (s)=0 \textrm{ for } s\geq 0, \\
 \beta (s)<0 \textrm{ for } s<0 \textrm{ and } \\
 0<\beta'(s)\leq 1 \textrm{ for } s <0, \\
 \lim_{s \rightarrow -\infty}\beta (s)=-\infty
\end{align*}
Next we consider the following penalization function $\beta_\varepsilon (s)= \beta (s/ \varepsilon)$, 
for  $ s\in \mathbb{R}, \varepsilon >0$, and the corresponding penalized system
\begin{equation} \label{pensystem}
\begin{cases}
-\Delta u^1_{\varepsilon}+f^1+\beta_\varepsilon (u^1_\varepsilon-u^2_\varepsilon+\psi^1)=0 \\
-\Delta u^2_{\varepsilon}+f^2+\beta_\varepsilon (u^2_\varepsilon-u^1_\varepsilon+\psi^2)=0 ,
\end{cases}
\end{equation}
with boundary conditions $ u^i_\varepsilon=g^i \textrm{ on } \partial\Omega $.

\par
For $\varepsilon >0$ fixed, the penalized system \eqref{pensystem}
can be solved by several methods. In the paper 
\cite{EF79} the authors use nonlinear functional analysis methods in order to derive the existence 
of classical solutions, that is $u^i_\varepsilon \in C^2(\Omega)$, assuming that the switching costs are positive constants. 
The proof is rather technical, however it works line for line in our case with variable switching costs,
therefore we omit it.

\begin{lemma} \label{penlemma}
Under the assumptions \eqref{bc} and \eqref{costass} the solutions to the penalized system 
\eqref{pensystem}, $u^i_{\varepsilon}$ satisfy the
following estimates for every $\varepsilon>0$
\begin{enumerate}

\item [ i.) ]
 \begin{equation*}  
 - \max_i  \Arrowvert f^i \rVert_{L^{\infty}} \leq -\Delta u^i_{\varepsilon} 
 \leq \max_i \Arrowvert \Delta \psi^i \rVert_{L^{\infty}}+
 3 \max_i \Arrowvert f^i \rVert_{L^{\infty}}.
 \end{equation*}
 
 \item[ ii.) ]
 \begin{equation*}
 u^1_\varepsilon-u^2_\varepsilon+\psi^1 \geq -C \varepsilon 
 \textrm{ and }
 u^2_\varepsilon-u^1_\varepsilon+\psi^2 \geq -C \varepsilon 
\end{equation*}
\end{enumerate}
In $ ii.) $ the constant $C>0$ depends only on the given data and can 
be computed explicitly in terms of $\beta $.
\end{lemma}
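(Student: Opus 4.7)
The plan is to prove part (ii) first via a maximum principle argument, and then deduce part (i) from it. The lower bound in (i) is essentially free: since $\beta_\varepsilon \leq 0$, the equation rewrites as $-\Delta u^i_\varepsilon = -f^i - \beta_\varepsilon(\cdot) \geq -f^i$, which gives the lower bound with constant $\max_i \|f^i\|_{L^\infty}$. So the real content is (ii) together with the matching upper bound in (i).

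For (ii), I would set $v^1 := u^1_\varepsilon - u^2_\varepsilon + \psi^1$ and $v^2 := u^2_\varepsilon - u^1_\varepsilon + \psi^2$. The key observation is that $v^1 + v^2 = \psi^1 + \psi^2 \geq 0$ by the loop assumption, and both $v^i \geq 0$ on $\partial \Omega$ by the compatibility condition \eqref{bc}. Subtracting the two equations in \eqref{pensystem} and adding $\Delta \psi^1$ gives
\begin{equation*}
\Delta v^1 = f^1 - f^2 + \beta_\varepsilon(v^1) - \beta_\varepsilon(v^2) + \Delta \psi^1.
\end{equation*}
Suppose $v^1$ attains a negative minimum at an interior point $x_0 \in \Omega$ (otherwise $v^1 \geq 0$ and the claim is trivial). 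Then $v^2(x_0) \geq -v^1(x_0) > 0$ by the loop inequality, so $\beta_\varepsilon(v^2(x_0)) = 0$. Using $\Delta v^1(x_0) \geq 0$ and $\beta \leq 0$ yields
\begin{equation*}
\beta_\varepsilon(v^1(x_0)) \geq f^2(x_0) - f^1(x_0) - \Delta \psi^1(x_0) \geq -\bigl(2\max_i \|f^i\|_{L^\infty} + \max_i \|\Delta \psi^i\|_{L^\infty}\bigr) =: -M.
\end{equation*}
Since $\beta$ is strictly increasing on $(-\infty, 0)$ with $\beta(-\infty) = -\infty$, there is a unique $s_0 < 0$ with $\beta(s_0) = -M$; writing $\beta_\varepsilon(v^1(x_0)) = \beta(v^1(x_0)/\varepsilon) \geq \beta(s_0)$ forces $v^1(x_0)/\varepsilon \geq s_0$, i.e.\ $v^1 \geq -|s_0|\,\varepsilon$ everywhere. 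The same argument applied to $v^2$ gives the other inequality, with the explicit constant $C = |s_0|$ depending only on $\beta$ and on $M$.

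Finally, for the upper bound in (i), note that by the bound just proved we have $v^i/\varepsilon \geq -C$ pointwise, and monotonicity of $\beta$ gives $\beta_\varepsilon(v^i) \geq \beta(-C) = -M$. Therefore
\begin{equation*}
-\Delta u^i_\varepsilon = -f^i - \beta_\varepsilon(v^i) \leq \|f^i\|_{L^\infty} + M = 3\max_i \|f^i\|_{L^\infty} + \max_i \|\Delta \psi^i\|_{L^\infty},
\end{equation*}
which is precisely the stated constant. The main subtlety is the justification of the maximum principle step: one needs $v^1$ to attain its (possibly negative) infimum in $\overline{\Omega}$, which follows from $u^i_\varepsilon \in C^2(\Omega) \cap C(\overline{\Omega})$ as provided by the penalized existence theorem of \cite{EF79}; once this is in hand the rest is elementary. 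Note the constant $C$ in (ii) is $\varepsilon$-independent precisely because the bound $-M$ on $\beta$ is, and the rescaling $\beta_\varepsilon(s) = \beta(s/\varepsilon)$ transfers the $\varepsilon$ cleanly to the conclusion.
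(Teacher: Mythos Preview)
Your proof is correct and follows essentially the same route as the paper: both arguments introduce $v^i = u^i_\varepsilon - u^j_\varepsilon + \psi^i$, locate an interior minimum, use the loop condition $v^1+v^2\ge 0$ to kill $\beta_\varepsilon(v^2)$ there, and read off from the equation a uniform lower bound $\beta_\varepsilon(v^i)\ge -M$ with $M=\max_i\|\Delta\psi^i\|_{L^\infty}+2\max_i\|f^i\|_{L^\infty}$, from which both (ii) and the upper bound in (i) follow. The only cosmetic difference is that the paper minimises $\beta_\varepsilon(\theta^i_\varepsilon)$ jointly over $i$ and $x$ while you treat $v^1$ and $v^2$ separately, and the paper presents the upper bound in (i) before (ii) rather than after; neither affects the substance.
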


\begin{proof}
 For our convenience, let us denote $\theta^1_\varepsilon=u^1_\varepsilon-u^2_\varepsilon+\psi^1$ and 
  $\theta^2_\varepsilon=u^2_\varepsilon-u^1_\varepsilon+\psi^2$, and observe that $\theta^1_\varepsilon $
  and $\theta^2_\varepsilon $ cannot be 
  negative at the same time according to
  the nonnegative loop assumption. 
  
  \par
  Now let us fix $\varepsilon >0$, 
  and consider the function $\beta_\varepsilon( \theta^i_\varepsilon(x))$, $x \in \Omega$. 
  It is bounded from above by $0$, our aim is to prove that $\beta_\varepsilon( \theta^i_\varepsilon(x))$ is
  bounded from below.
  Let $x_0=x_0(\varepsilon)$ be a point of minimum for the function $\beta_\varepsilon( \theta^1_\varepsilon(x))$,
  moreover without loss of generality, we may assume that 
  \begin{equation*}
   \min_{i=1,2;x\in \overline{\Omega}} \beta_\varepsilon( \theta^i_\varepsilon(x))=
   \beta_\varepsilon( \theta^1_\varepsilon(x_0))<0.
  \end{equation*}
If $x_0 \in \partial \Omega$, then $ \beta_\varepsilon( \theta^1_\varepsilon(x_0))=0$ 
according to \eqref{bc}. Therefore $ x_0 \in \Omega $ is an interior point, and $ \beta_\varepsilon( \theta^1_\varepsilon(x_0))<0$.
Then $ \theta^1_\varepsilon(x_0) <0$, and since $\theta^1_\varepsilon+ \theta^2_\varepsilon \geq 0$, we get
$ \theta^2_\varepsilon(x_0) \geq 0 $ consequently $ \beta_\varepsilon( \theta^2_\varepsilon(x_0))=0$.
Since $\beta_\varepsilon$ is nondecreasing and $\beta_\varepsilon(t)<0$ if and only if $t<0$, we 
get that 
\begin{equation*}
   \min_{i=1,2;x\in \overline{\Omega}}  \theta^i_\varepsilon(x)=
   \theta^1_\varepsilon(x_0).
 \end{equation*}
 This implies that $\theta ^1_\varepsilon= u^1_\varepsilon-u^2_\varepsilon+\psi^1 $ 
 achieves its minimum at an interior point $x_0$,
 hence $\Delta u^1_\varepsilon-\Delta u^2_\varepsilon+\Delta \psi^1\geq 0$
at $x_0$. The last inequality together with $-\Delta u^2_\varepsilon(x_0)+f^2(x_0)=0$ shows that 
\begin{equation*}
 \beta_\varepsilon( \theta^1_\varepsilon(x_0))= \Delta u^1_{\varepsilon}(x_0)-f^1(x_0)=
\end{equation*}
\begin{equation*}
 \Delta u^1_{\varepsilon}(x_0)-\Delta u^2_{\varepsilon}(x_0) +f^2(x_0)-f^1(x_0)
 \geq -\Delta \psi^1 (x_0)+f^2(x_0)-f^1(x_0).
\end{equation*}
The estimate above is true for any $\varepsilon >0$, and therefore it proves the right inequality in $ i.) $. 
The left inequality in $ i.) $ is a direct consequence of $ -\beta_\varepsilon \geq 0$.

\par 
In order to prove  $ ii.) $, we
recall that $\lim_{s\rightarrow -\infty} \beta(s)=-\infty$, and
$\beta_\varepsilon(s)=\beta(s/\varepsilon)$, hence $\beta_\varepsilon(\theta^i_\varepsilon)$ is
bounded imples that $ \frac{\theta^i_\varepsilon}{\varepsilon}$ is uniformly bounded from below 
by a negative constant $-C\leq 0$. This finishes the proof of point $ ii.) $ in our lemma.
\end{proof}

\par
Using the Sobolev embedding theorem and Calderon-Zygmund estimates, we can conclude that
the functions $u^i_\varepsilon$ are uniformly bounded in $W^{2,p}$ for every $1<p<\infty$.
Therefore through a subsequence $u^i_{\varepsilon}$ converges to a function $u^i_0$
locally weakly in $W^{2,p}$ and strongly in $C^{1,\gamma}$ for every $0<\gamma<1$. 

\par
Now we proceed to prove the existence of solutions to system \eqref{system2}.

\begin{proposition}
Let $ (u^1_0,u^2_0)= \lim_{\varepsilon \rightarrow 0} (u^1_\varepsilon,u^2_\varepsilon)$ 
through a subsequence weakly in $W^{2,p}$ and strongly in $C^{1,\gamma}$.
 Then $(u^1_0,u^2_0)$ solves the following system 
 \begin{equation} \label{system3}
\begin{cases}
\min (-\Delta u^1_0+f^1, u^1_0-u^2_0+\psi^1)=0, \\
\min (-\Delta u^2_0+f^2, u^2_0-u^1_0+\psi^2)=0,  \\
\min (-\Delta u^1_0+f^1, -\Delta u^2_0+f^2)=0
\end{cases}
\end{equation}
in a strong sense, i.e.
$ u^i-u^j+\psi^i \geq 0$ 
and if we have a strict inequality at some point then $u^i$ satisfies $ \Delta u^i_0=f^i$ in a
neighborhood of that point, and  $ -\Delta u^i_0+f^i \geq 0 \textrm{ a.e. } $.
\end{proposition}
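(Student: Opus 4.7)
The plan is to pass to the limit $\varepsilon \to 0^+$ in the penalized system \eqref{pensystem} along the chosen subsequence, using two modes of convergence: the strong $C^{1,\gamma}$-convergence $u^i_\varepsilon \to u^i_0$ (which, since $\psi^i \in C^{2,\alpha}$, promotes $\theta^i_\varepsilon := u^i_\varepsilon - u^j_\varepsilon + \psi^i$ to a uniformly convergent sequence with continuous limit $\theta^i_0$), and the weak $W^{2,p}$-convergence $\Delta u^i_\varepsilon \rightharpoonup \Delta u^i_0$ in $L^p$ for every $p < \infty$, which follows from the uniform $L^\infty$-bound in Lemma \ref{penlemma}(i) via Calderon-Zygmund.

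I would first establish the pointwise inequality $\theta^i_0 \geq 0$ in $\Omega$ directly from Lemma \ref{penlemma}(ii) and the uniform convergence. Rewriting the penalized equation as $-\Delta u^i_\varepsilon + f^i = -\beta_\varepsilon(\theta^i_\varepsilon) \geq 0$ and taking the weak $L^p$ limit, the weak closedness of the nonnegative cone in $L^p$ gives $-\Delta u^i_0 + f^i \geq 0$ a.e., i.e.\ the last bullet of the strong-sense definition. For the neighbourhood statement: if $\theta^i_0(x_0) > 0$, continuity yields a ball $B \ni x_0$ on which $\theta^i_0 > \delta$; by uniform convergence $\theta^i_\varepsilon > \delta/2$ on $B$ for small $\varepsilon$, whence $\beta_\varepsilon(\theta^i_\varepsilon) \equiv 0$ on $B$ and $\Delta u^i_\varepsilon = f^i$ there. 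Passing to the weak $L^p$ limit, $\Delta u^i_0 = f^i$ a.e.\ in $B$, which furnishes the strong sense of the first two minimum equations. The third equation at any $x \notin \mathscr{L}$ is an immediate byproduct: since $\theta^1_0 + \theta^2_0 = \psi^1 + \psi^2 > 0$ there, at least one $\theta^i_0(x) > 0$ and $\Delta u^i_0 = f^i$ holds in a neighbourhood.

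I expect the main technical difficulty to be the third minimum equation on $\mathscr{L} = \{\psi^1 + \psi^2 = 0\}$, where both $\theta^i_0$ vanish and the uniform-convergence argument above no longer yields $\beta_\varepsilon(\theta^i_\varepsilon) \to 0$. The identity to leverage is that $\beta_\varepsilon$ being nondecreasing gives
\begin{equation*}
 \max_i \beta_\varepsilon(\theta^i_\varepsilon)(x) = \beta_\varepsilon\bigl(\max_i \theta^i_\varepsilon(x)\bigr) = 0
\end{equation*}
at every $x$, since at each point one of the $\theta^i_\varepsilon$ is at least $(\psi^1+\psi^2)(x)/2 \geq 0$; equivalently $\min_i(-\Delta u^i_\varepsilon + f^i) \equiv 0$ throughout $\Omega$. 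To push this identity through the weak $L^p$ limit on $\mathscr{L}$, one invokes the standard Sobolev-level-set fact that $D^2 \theta^i_0 = 0$ almost everywhere on $\{\theta^i_0 = 0\}$ for $\theta^i_0 \in W^{2,p}$, together with the analogous fact for $\psi^1 + \psi^2$ on $\mathscr{L}$; these compatibility relations combine with the sign a.e.\ inequality of Step 2 to yield the third identity almost everywhere.
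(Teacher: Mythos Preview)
Your plan for the first two obstacle equations and for the inequality $-\Delta u^i_0+f^i\ge 0$ a.e.\ coincides with the paper's proof: Lemma~\ref{penlemma}(ii) plus uniform convergence gives $\theta^i_0\ge 0$; on $\{\theta^i_0>0\}$ the penalization vanishes for small $\varepsilon$, so $\Delta u^i_\varepsilon=f^i$ there and the (weak) limit inherits this. The paper argues exactly this way.

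For the third equation the paper is a one--liner: it records that $\min_i(-\Delta u^i_\varepsilon+f^i)\equiv 0$ pointwise (since $\theta^1_\varepsilon+\theta^2_\varepsilon\ge 0$ forces one $\beta_\varepsilon(\theta^i_\varepsilon)$ to vanish at every point) and then simply writes ``after passing to a limit through a subsequence''. You go further than the paper by isolating the genuine difficulty on $\mathscr L$, where neither $\theta^i_0$ is strictly positive and the uniform--convergence trick is unavailable.

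However, the level--set argument you sketch does \emph{not} close that gap. From $D^2\theta^1_0=0$ a.e.\ on $\{\theta^1_0=0\}\supset\mathscr L$ you obtain, a.e.\ on $\mathscr L$,
\[
(-\Delta u^1_0+f^1)-(-\Delta u^2_0+f^2)=\Delta\psi^1+f^1-f^2,
\]
and the Stampacchia fact for $\psi^1+\psi^2$ only confirms consistency of the two $\theta$--relations. Together with $-\Delta u^i_0+f^i\ge 0$ this pins down the \emph{difference} of the two nonnegative quantities, not their minimum: nothing prevents both from being strictly positive with the prescribed gap. So ``these compatibility relations combine with the sign inequality to yield the third identity'' is where your argument breaks.

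One way to repair it (and to make the paper's terse limit rigorous) is to upgrade the convergence of the \emph{difference}. Writing $g^i_\varepsilon:=-\Delta u^i_\varepsilon+f^i\ge 0$, the pointwise identity $\min(g^1_\varepsilon,g^2_\varepsilon)=0$ is equivalent to $g^1_\varepsilon+g^2_\varepsilon=|g^1_\varepsilon-g^2_\varepsilon|$. The left side passes to the weak $L^p$ limit. The right side equals $|-\Delta(u^1_\varepsilon-u^2_\varepsilon)+f^1-f^2|$; since $U_\varepsilon:=u^1_\varepsilon-u^2_\varepsilon$ is the monotone penalization of the double--obstacle problem for $U$ with obstacles $-\psi^1,\psi^2$, one has $U_\varepsilon\to U_0$ \emph{strongly} in $W^{2,p}$, hence $|g^1_\varepsilon-g^2_\varepsilon|\to|g^1_0-g^2_0|$ in $L^p$. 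Matching the two limits gives $g^1_0+g^2_0=|g^1_0-g^2_0|$ a.e., i.e.\ $\min(g^1_0,g^2_0)=0$ a.e., also on $\mathscr L$.
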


\begin{proof}
The property $ ii.) $ in Lemma 1, together with the strong convergence in $C^{1,\gamma}$ shows that
$ u^1_0-u^2_0+\psi^1\geq 0$ and $u^2_0-u^1_0+\psi^2 \geq 0$. If $ u^1(x_0)-u^2(x_0)+\psi^1(x_0) >0$,
then the strict inequality
$u^1_\varepsilon-u^2_\varepsilon+\psi^1>0$   holds in a small ball $B_r(x_0)$, centered at $x_0$
for $\varepsilon>0$ small enough. Then it follows that  $ -\Delta u^1_\varepsilon+f^1= 0$ in $B_r(x_0)$, 
and we know that $ \Arrowvert  \Delta u^1_\varepsilon \rVert_{L^\infty}$ is uniformly bounded, 
therefore through a subsequence, 
$ \Delta u^1_\varepsilon \rightarrow \Delta u^1_0 $  a.e. as $\varepsilon \rightarrow 0$, 
consequently
  $ -\Delta u^1+f^1= 0$  a.e. in $B_r(x_0)$. Moreover, since $f^1 \in C^\alpha$, we that $u^1$ is
  a classical solution to
  $ -\Delta u^1+f^1= 0$ in the ball $B_r(x_0)$.

\par
The solutions of the penalized system satisfy the equation
\begin{equation*}
 \min(-\Delta u^1_\varepsilon+f^1, -\Delta u^2_\varepsilon+f^2)=0.
\end{equation*}
After passing to a limit through a subsequence, we get the following
\begin{equation*}
 \min(-\Delta u^1_0+f^1, -\Delta u^2_0+f^2)=0 \textrm{ a.e.}.
\end{equation*}
\end{proof}

\par
Proposition 1 shows that there exists $(u^1_0, u^2_0), u^i_0 \in W^{2,p}, \forall p <\infty$ solving 
\eqref{system2} in a strong sense. According to Lemma 1, $u^i_0$ has the following property

\begin{equation}\label{keyest}
\Arrowvert \Delta u^i_0 \rVert_{L^\infty} \leq
\max_i \Arrowvert \Delta \psi^i \rVert_{L^{\infty}}+
 3 \max_i \Arrowvert f^i \rVert_{L^{\infty}},
\end{equation}
which will be relevant for deriving further regularity of solutions.

\par
 Furthermore, Proposition 1 tells us that the solution we get via the penalization method, 
 solves an extra equation,
 which turns out to be very important in the discussion of the uniqueness.

\subsection{Uniqueness}
It has been shown in the paper \cite{LB83} that if there are no zero loops, then the
solution to the system \eqref{system2} is unique. Here we give a counterexample showing
that the uniqueness does not hold in case 
there are zero loops.

\begin{example}[Diogo Gomes] 
The following system
\begin{equation} \label{exun}
\begin{cases}
\min (-\Delta u^1-M, u^1-u^2+\psi)=0 \\
\min (-\Delta u^2+M, u^2-u^1-\psi)=0,
\end{cases}
\end{equation}
with given boundary conditions $u^i=g^i, g^1-g^2+\psi =0 $ on $\partial \Omega $,
admits infinitely many solutions, provided $ 2 M > \Arrowvert \Delta \psi \rVert_{L^{\infty}} $.
\par
Moreover, \eqref{exun} admits solutions $u^1, u^2 \notin C^{1,1}$.
\end{example}

\begin{proof}
Let $ (u^1, u^2)$ be a solution to the system \eqref{exun}.
Since both $u^1-u^2+\psi \geq 0$  and $u^2-u^1-\psi \geq 0$, it follows that
 $u^1-u^2+\psi \equiv 0$, therefore $ -\Delta u^1= -\Delta u^2+ \Delta \psi$.
 
 \par
 Now let us take any $u^1 \in W^{2,p}$, $p>n$,  $u^1=g^1$ on $\partial \Omega $, such that $ -\Delta u^1-M \geq 0$  a.e..
 Then the function $u^2=u^1+\psi $ satisfies the boundary conditions
$ u^2=g^2$ on $\partial \Omega $, and  $ -\Delta u^2+M \geq 0$ a.e. 
 since $ 2 M > \Arrowvert \Delta \psi \rVert_{L^{\infty}} $. 
Thus we get infinitely many solutions of the form $(u^1, u^1+\psi)$, which may not be $C^{1,1}$.
\end{proof}

\par
We observe that if the zero loop set is empty, then the equation 
$  \min(-\Delta u^1+f^1, -\Delta u^2+f^2)=0$ is satisfied automatically. 
Under the nonnegative loop assumption, we saw that there exists a 
solution to system \eqref{system2} also solving system \eqref{system3}. Next we show that 
the system \eqref{system3} has a unique solution, which is actually the minimal solution to (1).

\begin{proposition}
 The system \eqref{system3} has a unique solution $(u^1_0,u^2_0)$ in $W^{2,p}$ for every $p<\infty$.
\end{proposition}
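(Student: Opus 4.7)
The plan is to deduce uniqueness by establishing that every solution of \eqref{system3} is the minimal solution of \eqref{system2}, and that minimal solutions are obviously unique.

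First I would show the penalization limit $(u^1_0,u^2_0)$ is the minimal solution of \eqref{system2}. Given any solution $(v^1,v^2)$ of \eqref{system2}, the obstacle inequalities imply $\beta_\varepsilon(v^i-v^j+\psi^i)=0$, so $(v^1,v^2)$ is a supersolution of the penalized system \eqref{pensystem}. Applying Kato's inequality to $\max_i(u^i_\varepsilon-v^i)^+$ and observing that on the component attaining the maximum one has $u^i_\varepsilon-u^j_\varepsilon\geq v^i-v^j$, monotonicity of $\beta_\varepsilon$ gives $\beta_\varepsilon(u^i_\varepsilon-u^j_\varepsilon+\psi^i)\geq\beta_\varepsilon(v^i-v^j+\psi^i)=0$ and hence subharmonicity of the maximum. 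The maximum principle with vanishing boundary data yields $u^i_\varepsilon\leq v^i$, and passing $\varepsilon\to0$ gives $u^i_0\leq v^i$.

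The main step is to prove the symmetric statement: if $(v^1,v^2)$ solves \eqref{system3} and $(w^1,w^2)$ solves \eqref{system2}, then $v^i\leq w^i$. Here the third equation of \eqref{system3} is decisive. Set $\phi^i=(v^i-w^i)^+$ and $h=\max(\phi^1,\phi^2)$; by the compatibility condition \eqref{bc}, $h$ vanishes on $\partial\Omega$. On the open set where $\phi^1>\phi^2\geq 0$, one has $v^1-v^2>w^1-w^2\geq-\psi^1$, so the obstacle in the first equation of \eqref{system3} is inactive for $v^1$; hence $\Delta v^1=f^1\geq\Delta w^1$, giving $\Delta(v^1-w^1)\geq0$. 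The symmetric statement holds where $\phi^2$ strictly dominates. On the coincidence set $\{\phi^1=\phi^2>0\}$, the third equation of \eqref{system3} provides at almost every point a harmonic component $\Delta v^i=f^i$ that allows the subharmonicity to propagate. A careful distributional computation then shows $h$ is subharmonic on $\{h>0\}$, and the maximum principle forces $h\equiv0$.

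Combining the two comparisons, any two solutions of \eqref{system3} satisfy mutual inequalities and hence coincide. The main obstacle is the distributional subharmonicity of $h$ across the coincidence set $\{\phi^1=\phi^2\}$, where the distributional Laplacian of a maximum of two $W^{2,p}$ functions requires care; the decomposition must be organized so that on each relevant region at least one of the two obstacles is locally strict, which is precisely what the third equation of \eqref{system3} guarantees. The essential reliance on equation \eqref{system3}(C) matches Example 3, which shows that uniqueness can fail for \eqref{system2} alone whenever zero loops are present.
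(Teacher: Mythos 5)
Your route is genuinely different from the paper's: instead of first identifying the difference $u^1-u^2$ as the unique solution of a double-obstacle problem and then using the third equation of \eqref{system3} to force the common shift $h=u^1-v^1=u^2-v^2$ to be harmonic (which is the paper's two-line conclusion), you aim directly at the comparison ``any solution of \eqref{system3} lies below any solution of \eqref{system2}'' and deduce uniqueness by symmetry. That logical structure is sound, and if completed it would also give Corollary 1 without the auxiliary construction $\Delta h=\omega$. (Two small remarks: your first paragraph about comparison with the penalized solutions is not needed for the Proposition at all, and the vanishing of $h$ on $\partial\Omega$ comes from the fact that both solutions carry the same Dirichlet data $g^i$, not from the compatibility condition \eqref{bc}.)

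However, the step you yourself call the main obstacle is genuinely open in your write-up, and the reason you offer for it is not sufficient. On the coincidence set $\{v^1-w^1=v^2-w^2>0\}$ the third equation of \eqref{system3} only tells you that at a.e.\ point \emph{one} index $i$ satisfies $\Delta v^i=f^i\geq \Delta w^i$; knowing that one of the two functions $v^i-w^i$ has nonnegative Laplacian at a.e.\ point of the coincidence set does not by itself make $\max\bigl((v^1-w^1)^+,(v^2-w^2)^+\bigr)$ subharmonic there, since the other difference could have strictly negative Laplacian on a set of positive measure, and the distributional Laplacian of a maximum picks up exactly this ambiguity on $\{a=b\}$. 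What closes the argument is a fact you never invoke: for $W^{2,p}$ functions, $D^2(a-b)=0$ a.e.\ on $\{a=b\}$ (apply twice the statement that the gradient of a Sobolev function vanishes a.e.\ on its zero set), so that $\Delta(v^1-w^1)=\Delta(v^2-w^2)$ a.e.\ on the coincidence set and the ``good'' index forces both Laplacians to be nonnegative there; combining this with Kato's inequality for the positive part, $\Delta\max(a,b)\geq \chi_{\{a>b\}}\Delta a+\chi_{\{a\leq b\}}\Delta b$ in the sense of distributions, one gets subharmonicity of your $h$ on $\{h>0\}$ and the maximum principle finishes. Without this ingredient the proof does not go through as stated. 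It is also worth noting that the paper's argument avoids the coincidence-set analysis entirely: uniqueness of the $W^{2,p}$ solution of the double-obstacle problem for $U=u^1-u^2$ shows any two solutions of \eqref{system3} differ by a single function $h$, and plugging $v^i=u^i-h$ into the third equation gives $\Delta h=0$ a.e., hence $h\equiv 0$; your approach is heavier but, once repaired, proves the stronger minimality statement directly.
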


\begin{proof}
Let us assume that $(u^1,u^2)$ is a solution to system \eqref{system3}, then 
the difference $ U= u^1-u^2 $ solves the following double-obstacle
problem in $\Omega $:
\begin{equation*}
\begin{cases}
-\Delta U +f^1-f^2 \leq 0 \textrm{ a.e.   if  } U >-\psi^1 \\
-\Delta U +f^1-f^2 \geq 0 \textrm{ a.e.   if  } U <\psi^2  \\
-\psi^1 \leq U \leq \psi^2,
\end{cases}
\end{equation*}
with boundary conditions $ U= g^1-g^2 $
on $\partial \Omega $.

\par
It is well-known that the solution to the double-obstacle problem with given boundary data 
is unique in $W^{2,p}$. Indeed, let $V $ be another solution, then 
without loss of generality, we may assume that 
$ \max_{x\in \Omega } (U-V)= U(x_0)-V(x_0)>0$.
Then in a small ball $B_r(x_0)$, one has $ U-V>0$, and $U-V$
 has a maximum at $x_0$. The inequality $ U>V \geq -\psi^1$ imples that
 $U > -\psi^1$ in $B_r(x_0)$, hence $ -\Delta U -f^1+f^2 \leq 0$.
 Similarly, $V < \psi^2 $ in $B_r(x_0)$ and therefore $ -\Delta V -f^1+f^2 \geq 0$.
 After combining the inequalities $ -\Delta U -f^1+f^2 \leq 0$ and
 $ -\Delta V -f^1+f^2 \geq 0$, we see that $U-V$ is a subharmonic function in the 
 ball $B_r(x_0)$. Recalling that $U-V $ has a maximum at an interior point $x_0$, we get a 
 contradiction to the maximum principle for subharmonic functions.

\par
Now let us assume that $(v^1,v^2)$ 
is another solution to system \eqref{system3}, then $ u^1-u^2 \equiv v^1- v^2 $ in $\Omega$.
Denote $ h= u^1-v^1 \equiv u^2- v^2$ in $\Omega$, then $h=0$ on $\partial \Omega$.

\par
Now let us plugg-in $ v^1= u^1-h $ and $ v^2= u^2-h $   to the equation
 \begin{align*}
 0=\min(-\Delta v^1+f^1, -\Delta v^2+f^2)=\\
 \min(-\Delta u^1+f^1, -\Delta u^2+f^2)+ \Delta h= \Delta h \textrm{        a.e. }.
 \end{align*}
 Then it follows that $\Delta h = 0  \textrm{    a.e. } $ in $\Omega$, $h\in W^{2,p}(\Omega)$,
 for every $ 1 <p<\infty$, hence
 $ h $ is a harmonic function.  Then the difference $u^i-v^i$ is a harmonic function in 
 $\Omega $, vanishing on the boundary, therefore
 $u^i-v^i \equiv 0$, according to the maximum principle for harmonic functions.

\end{proof}

\begin{corollary}
The solution to the system \eqref{system3} is the minimal solution to system \eqref{system2}, that is
 if $(v^1, v^2)$ solves \eqref{system2}, then $u^1_0\leq v^1$ and $u^2_0 \leq v^2$.
\end{corollary}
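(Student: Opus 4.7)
The plan is to exploit the extra third equation in \eqref{system3}, which an arbitrary solution of \eqref{system2} need not satisfy, in order to force a pointwise comparison. Let $(v^1,v^2)$ be any $W^{2,p}$ solution of \eqref{system2} and set $w^i := u^i_0 - v^i$, so that $w^i = 0$ on $\partial\Omega$. It suffices to prove $w^i \leq 0$ in $\Omega$ for $i=1,2$.

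The decisive observation is a pointwise dichotomy. At any point $x$ where $w^1(x) > w^2(x)$, the inequality $v^1 - v^2 + \psi^1 \geq 0$ implies $u^1_0(x) - u^2_0(x) + \psi^1(x) = (w^1-w^2)(x) + (v^1 - v^2 + \psi^1)(x) > 0$, so the minimum in the first equation of \eqref{system3} must be attained by its first argument, forcing $-\Delta u^1_0 + f^1 = 0$ a.e.\ on the open set $\{w^1 > w^2\}$. Combined with $-\Delta v^1 + f^1 \geq 0$, this yields $\Delta w^1 \geq 0$ a.e.\ on $\{w^1 > w^2\}$, and symmetrically $\Delta w^2 \geq 0$ a.e.\ on $\{w^2 > w^1\}$. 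On the coincidence set $\{w^1 = w^2\}$, a Stampacchia-type lemma for $W^{2,p}$ functions gives $\Delta w^1 = \Delta w^2$ a.e., while the third equation in \eqref{system3} ensures that at least one of $-\Delta u^i_0 + f^i$ vanishes; hence $\Delta w^1 = \Delta w^2 \geq 0$ there as well.

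Next I would show that $w := \max(w^1, w^2)$ is subharmonic in $\Omega$ in the distributional sense. The cleanest route is to regularize the maximum by $w_\delta := \tfrac{1}{2}\bigl(w^1 + w^2 + \sqrt{(w^1 - w^2)^2 + \delta^2}\bigr)$, compute $\Delta w_\delta$ in terms of the Hessians and gradients of $w^1, w^2$, and use the pointwise information above (together with $D^2(w^1 - w^2) = 0$ a.e.\ on $\{w^1 = w^2\}$) to pass to the limit $\delta \to 0$. Since $w = 0$ on $\partial\Omega$, the weak maximum principle for subharmonic functions then forces $w \leq 0$, which is exactly $u^i_0 \leq v^i$ for $i=1,2$.

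The main obstacle I anticipate is the rigorous justification of the subharmonicity of $\max(w^1, w^2)$ from the a.e.\ Hessian information, particularly the handling of the coincidence set $\{w^1 = w^2\}$. A cleaner alternative would be to observe that $(v^1, v^2)$ is automatically a supersolution of the penalized system \eqref{pensystem} (since $\beta_\varepsilon$ vanishes on $[0,\infty)$ and $v^i - v^j + \psi^i \geq 0$), subtract to obtain a weakly coupled cooperative linear system for $u^i_\varepsilon - v^i$ with zero boundary data (the coupling coefficients are nonnegative by monotonicity of $\beta_\varepsilon$), apply the maximum principle for such cooperative systems to conclude $u^i_\varepsilon \leq v^i$, and pass to the limit $\varepsilon \to 0$.
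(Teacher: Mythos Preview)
Your two proposed approaches are both sound in outline, but the paper takes a much shorter route that you have overlooked: it leverages the uniqueness result for \eqref{system3} (Proposition~2) directly, rather than building a comparison principle from scratch. Concretely, given any solution $(v^1,v^2)$ of \eqref{system2}, set $\omega := \min(-\Delta v^1+f^1,\,-\Delta v^2+f^2)\geq 0$ a.e., solve $\Delta h=\omega$ in $\Omega$ with $h=0$ on $\partial\Omega$, so that $h\leq 0$ by the weak maximum principle. One then checks that $(v^1+h,\,v^2+h)$ solves \eqref{system3} with the same boundary data as $(u^1_0,u^2_0)$; uniqueness forces $u^i_0=v^i+h\leq v^i$. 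This is a three-line argument once Proposition~2 is in hand.

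Your first approach (subharmonicity of $\max(w^1,w^2)$) is correct and self-contained, but, as you yourself note, the passage from a.e.\ Hessian sign information to distributional subharmonicity of the max requires some care on the coincidence set. Your second approach (comparison with the penalized system via the maximum principle for cooperative systems) is also correct and is in fact a standard way to prove minimality in optimal switching; its advantage is that it bypasses both the uniqueness of \eqref{system3} and the $\max$-regularization issue. The paper's argument buys brevity by recycling Proposition~2; your approaches buy independence from that proposition at the cost of a more technical comparison argument.
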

\begin{proof}
 Assume $(v^1, v^2)$ solves \eqref{system2}
with given boundary conditions, and let 
$ \omega = \min (-\Delta v^1+f^1, -\Delta v^2+f^2)$ , then $ \omega \geq 0 \textrm{ a.e.}$, $ \omega \in L^{\infty}$.
Let $h$ be the solution to  $\Delta h = \omega $ in $\Omega $ with zero Dirichlet
boundary conditions on $\partial \Omega$. Then according to the weak maximum principle
for subharmonic functions, we get that $h \leq 0 $ in $\Omega$. 

\par
Now we note that 
the pair $(v^1+h, v^2+h)$ solves the system \eqref{system3} with the same boundary conditions as 
$(u^1_0, u^2_0)$, hence
$v^1+h=u^1_0 \textrm{ and }  v^2+h=u^2_0 $. Then the minimality of $(u^1_0, u^2_0)$
follows from nonpositivity of $h$.
\end{proof}

From now on we will be interested in studying the regularity for the minimal solutions.
As the Example 2 shows, there is no hope to get $C^{1,1}$-regularity for non-minimal solutions.

\section{Optimal regularity of the solutions}

In this section we prove that the solution to the system \eqref{system3} is locally $C^{1,1}$,
 if $\mathscr{L}= \overline{\mathscr{L}^0}$. In particular 
we study the regularity of the solutions on $ \partial \mathscr{L}$, the boundary of the zero-loop set.

\par
Before proceeding to the discussion of $C^{1,1}$-regularity, let us rewrite our system in a more 
convenient way. We have assumed that $ f^1, f^2 \in  C^\alpha$, therefore there exist
$ v^1, v^2 \in C^{2,\alpha}_{loc}$ solving the Poisson equation $\Delta v^i= f^i$ in $\Omega$.
Recall that $ (u^1_0, u^2_0)$ is the solution to system \eqref{system3}, and define 
$ u^i= u^i_0-v^i $, then $u^i_0$ is as regular as  $u^i$ up to $C^{2,\alpha}$, and
 $ (u^1, u^2 )$ solves the following system 
\begin{equation} \label{system}
\begin{cases}
\min (-\Delta u^1, u^1-u^2+\varphi^1)=0,\\
\min (-\Delta u^2, u^2-u^1+\varphi^2)=0,\\
\min (-\Delta u^1,-\Delta u^2)=0.
\end{cases}
\end{equation}
Here $ \varphi^1= v^1-v^2+\psi^1$ 
and $ \varphi^2= v^2-v^1+\psi^2$ are the new switching cost functions preserving
the loop condition $\varphi^1+\varphi^2 \equiv \psi^1+\psi^2$, and
$ \varphi^1, \varphi^2 \in C^{2,\alpha}_{loc}$. 

\par
From now on we will be focused on studying the regularity  of $(u^1, u^2)$ solving
the system \eqref{system}.

\par
We define the open set $ \Omega_1:= \cup B_r(x_0, u^1)$, where the union 
is taken over the balls $ B_r(x_0, u^1)$, such that $ -\Delta u^1 > 0$ a.e. 
in $B_r(x_0, u^1)$. Similarly we define the set $\Omega_2$ corresponding to the function 
$u^2$, and let $\Omega_{12}= \Omega \setminus \overline{ \Omega_1 \cup \Omega_2}$.
Then $\Omega_1$, $\Omega_2 $ and $\Omega_{12}$
are disjoint open sets, and since $\varphi^1, \varphi^2 \in C^{2,\alpha}$,
\begin{align*}
 -\Delta u^1= \Delta \varphi^1 >0, -\Delta u^2= 0  \textrm{  in  } \Omega_1, \\
 -\Delta u^2= \Delta \varphi^2 >0, -\Delta u^1= 0  \textrm{  in  } \Omega_2,  \\
 -\Delta u^1=0, -\Delta u^2=0 \textrm{  in  } \Omega_{12}.
\end{align*}

\par
In the set $\Omega \setminus \overline{\Omega_1}$ we get 
$-\Delta u^1=0$, and the function $u^2$ solves the
 obstacle problem $\min(-\Delta u^2, u^2-u^1+\varphi^2)= 0$, with a $C^{2, \alpha}$ obstacle
 $ u^1-\varphi^2$. Therefore $u^2$ is locally $C^{1,1}$ in $  \Omega \setminus \overline{\Omega_1} $.
 Similarly we get that $u^2$ is locally $C^{1,1}$ in $  \Omega \setminus \overline{\Omega_2} $.

 \par
 Next we need to study the regularity of the solution in a neighborhood of the set 
 $\partial \Omega_1 \cap \partial \Omega_2 \cap \Omega$. Let us note that it is contained in
 the zero loop set, $\partial \Omega_1 \cap \partial \Omega_2 \subset \mathscr{ L}$, since 
 $ u^1-u^2+\varphi^1 \equiv 0 \textrm{ in } \Omega_1$ and
 $ u^2-u^1+\varphi^2 \equiv 0 \textrm{ in } \Omega_2$.
 In the interior of the zero loop set the system \eqref{system} reduces to the equation 
 \begin{equation} \label{Leq}
 -\Delta u^1 = (\Delta \varphi^1)^+, u^2=u^1+\varphi^1 \textrm{ in } \mathscr{ L }^0.
 \end{equation}
 From the classical theory, solutions to the equation \eqref{Leq} are locally $C^{2, \alpha}$
 if $\Delta \varphi^1  \in C^{ \alpha}$. So in a neighborhood of the points 
 $x \in \partial \Omega_1 \cap \partial \Omega_2 \textrm{ and } x \in \mathscr{ L}^0 $,
 the solution is $C^{2, \alpha} $.
 
 \par
 It remains to study the regularity of $(u^1, u^2)$ at the points
 $ x_0 \in \partial \Omega_1 \cap \partial \Omega_2 \cap \partial \mathscr{ L}$, called a 
 ''meeting'' point. In this section we show that $u^1$ and $u^2$ are actually $C^{2,\alpha}$-regular at such points.
 
 \par
 For simplicity, let us study the system locally in the unit ball $B_1$, assuming that
 $0\in \partial \mathscr{L} \cap \partial \Omega_1 \cap \partial \Omega_2$. 
 We can always come to such a situation with a change of variables.

 \subsection{Blow-up procedure}
Assume that $(u^1, u^2)$ solves system \eqref{system} in the unit ball $B_1$, and
$0\in \partial \mathscr{L} = \partial \mathscr{L}^0 $ is a meeting point, and let us study the regularity of the 
functions $u^1$ and $u^2$ at $0$.

\begin{definition} \label{appol}
For a function $u \in W^{2,2}$, define $\Pi(u(x),r)=p_r(x) $, where
$p_r(x)= x \cdot A_r\cdot x+b_r\cdot x+c_r$ is a second order polynomial with the matrix $A_r$, vector $b_r$ 
and scalar $c_r$ minimizing the following expression
\begin{equation*}
 \int_{B_r}\left( |D^2u-2A_r|^2+|\nabla u -b_r|^2 +|u-c_r|^2 \right)dx.
\end{equation*}
\end{definition}

Then $ \Pi (u(rx),1)=p_r(rx)$, and
it is easy to see that 
\begin{equation*}
 p_r(x)=\frac{1}{2} x \cdot (D^2u)_r \cdot x+ (\nabla u)_r\cdot x + (u)_r,
\end{equation*}
where $ (u)_r:=(u)_{r,0}$, and $(u)_{r,x_0}$ is the average of $ u $ over the ball
$B_r(x_0)= \{ x \in \mathbb{R}^n \mid \lvert x-x_0 \rvert <r \}$,
\begin{equation*}
(u)_{r,x_0}=\frac{1}{\lvert B_r\rvert} \int_{B_r(x_0)}u.
\end{equation*}

\par
Now let $0 <r<1 $ be a real number, and define 
\begin{equation*}
 v^i_r(x)= \frac{u^i(r x)-\Pi(u^i(r x),1)}{S(r)},
\end{equation*}
where $S(r)$ is chosen such that $\max_i \Arrowvert D^2v^i_r \rVert_{L^2(B_1)}=1$.
Our aim is to describe the rate of convergence of $S(r)$ as $r$ goes to zero.

\par
It follows immediately from our definition of $ S(r)$, and $BMO$-estimates 
that  $\frac{S(r)}{r^2}$ is uniformly bounded from above.
In order to show this, let us recall that
 $ \Arrowvert \Delta u^i \rVert_{L^\infty} \leq \max_ i \Arrowvert \Delta \varphi^i \rVert_{L^\infty}$,
hence
$D^2 u^i \in BMO $ locally, with the following estimate
\begin{equation*}
 \Arrowvert D^2 u^i \rVert_ {BMO(B_{\frac{1}{2}})}\leq C(\max_ i \Arrowvert \Delta \varphi^i \rVert_{L^\infty}
 +\Arrowvert u^i \rVert_{L^2(B_1)}).
\end{equation*}

Without loss of generality, we may assume that $\Arrowvert D^2 v^1_r \rVert_{L^2(B_1)}=1$, for a fixed $r>0$, then 
\begin{equation*}
 \frac{S(r)}{r^2}=\Arrowvert D^2 u^1(r x)-2A^1_{r} \rVert_{L^2(B_1)}.
\end{equation*}

\par
A change of variable will give us
\begin{align*}
 \left(\frac{S(r)}{r^2} \right)^2=\frac{1}{r^n} \int_{B_{r}}|D^2u^1(y)-2A^1_{r}|^2dy =\\
 \frac{1}{r^n} \int_{B_{r}}|D^2u^1(y)-(D^2u^1)_{r}|^2 dy\leq 
 C \left( \max_ i \Arrowvert \Delta \varphi^i \rVert_{L^\infty}+\Arrowvert u^1 \rVert_{L^2(B_1)}\right)^2,
\end{align*}
therefore $ \forall r<\frac{1}{2} $
\begin{equation}
 \frac{S(r)}{r^2}
  \leq C \left(\max_ i \Arrowvert \Delta \varphi^i \rVert_{L^\infty}+
\max_i \Arrowvert u^i \rVert_{L^2(B_1)} \right) :=C_0.
\end{equation}

 \par
So $S(r)$ has at least quadratic decay  as $r\rightarrow 0$.
 Next we improve the estimate, showing that actually 
 $S(r) \leq C_0 r^{2+\alpha }$ for $r>0$ small enough.

 \begin{proposition}
 Let $\varphi^1, \varphi^2 \in C^{2, \alpha}$ for some $0<\alpha<1$, and $\mathscr{L}= \overline {\mathscr{L}^0}$,
 then the function $\frac{S(r)}{r^{2+\alpha}}$ is uniformly bounded as $r$ goes to zero
 \begin{equation} \label{main}
 \frac{S(r)}{r^{2+\alpha}}  \leq C \left( \max_ i \Arrowvert \Delta \varphi^i \rVert_{L^\infty}+
\max_i \Arrowvert u^i \rVert_{L^2(B_1)} \right),
\end{equation}
where $C$ is a dimensional constant.
\end{proposition}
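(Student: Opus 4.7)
The plan is to argue by contradiction in the spirit of the blow-up/compactness method used in \cite{ALS13}. Suppose the estimate \eqref{main} fails; then there is a sequence $r_k\downarrow 0$ with $\lambda_k := S(r_k)/r_k^{2+\alpha}\to\infty$. Writing $v^i_k := v^i_{r_k}$, the normalization $\max_i\|D^2 v^i_k\|_{L^2(B_1)}=1$, combined with the orthogonality $\Pi(v^i_k,1)=0$ — which forces $(v^i_k)_1=(\nabla v^i_k)_1=(D^2 v^i_k)_1=0$ — yields a uniform $W^{2,2}(B_1)$-bound on the $v^i_k$ via iterated Poincaré.

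Next I would write out the system \eqref{system} for $(u^1(r_k\cdot),\, u^2(r_k\cdot))$, substitute the identity $u^i(r_k x) = S(r_k)v^i_k(x) + p^i_{r_k}(r_k x)$, and divide through by $S(r_k)$. A direct computation gives $\Delta v^i_k(x) = \frac{r_k^2}{S(r_k)}\bigl[(\Delta u^i)(r_k x)-(\Delta u^i)_{r_k}\bigr]$, which by \eqref{keyest} and the preliminary inequality $S(r)/r^2\leq C_0$ is uniformly bounded in $L^\infty(B_1)$. The Taylor expansion $\varphi^i(r_k x) = P^i_k(x) + O((r_k|x|)^{2+\alpha})$ together with $r_k^{2+\alpha}/S(r_k) = 1/\lambda_k\to 0$ guarantees that the rescaled obstacle inequalities take the form $v^1_k - v^2_k + Q^1_k(x) + o(1)\geq 0$ and symmetrically, where the $Q^i_k$ are second-order polynomials assembled from $P^i_k$ and the polynomials $p^i_{r_k}$. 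Interior $L^p$-estimates together with Sobolev embedding then upgrade the $W^{2,2}$-bound into a uniform $C^{1,\gamma}$-bound on $\overline{B_{1/2}}$, producing the needed compactness.

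Passing to a subsequence, $v^i_k\rightharpoonup v^i_0$ weakly in $W^{2,2}(B_1)$ and strongly in $C^{1,\gamma}(\overline{B_{1/2}})$, and the identity $\Pi(v^i_0,1)=0$ is inherited in the limit. The heart of the argument is a rigidity step: one must show that $v^i_0$ coincides with a second-order polynomial. Here the hypothesis $\mathscr{L}=\overline{\mathscr{L}^0}$ is indispensable — it guarantees that arbitrarily close to the meeting point $0$ there are balls $B_\rho(y)\subset\mathscr{L}^0$ on which the system collapses to the single equation \eqref{Leq} with $C^\alpha$ right-hand side. Together with the continuity of $\Delta\varphi^i$ at $0$, this collapsed structure passes to the blow-up and pins $\Delta v^i_0$ to a constant on a set of full measure in $B_1$. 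Combining this with the rescaled obstacle constraints and the identity $\Pi(v^i_0,1)=0$ forces $v^i_0\equiv 0$.

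Finally, since $\Delta v^i_k$ is a linear combination of strongly $L^2$-convergent quantities, elliptic stability upgrades weak $W^{2,2}$ convergence to strong, whence $\|D^2 v^i_0\|_{L^2(B_1)} = \lim_k\|D^2 v^i_k\|_{L^2(B_1)}$. This contradicts the normalization $\max_i\|D^2 v^i_k\|_{L^2(B_1)}=1$, completing the proof. I expect the rigidity step in the third paragraph to be the main obstacle, as it is precisely where the hypothesis $\mathscr{L}=\overline{\mathscr{L}^0}$ must be leveraged in an essential way — without it, as the planned counterexample shows, nontrivial non-polynomial blow-up limits can survive and the estimate \eqref{main} fails.
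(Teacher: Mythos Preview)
Your overall architecture (contradiction, normalization, compactness, rigidity, strong-convergence contradiction) matches the paper's, but the rigidity step --- exactly where you flagged difficulty --- has a real gap. You work only on $B_1$. Even granting that $\Delta v^i_0$ is constant on $B_1$, this together with $\Pi(v^i_0,1)=0$ does \emph{not} force $v^i_0\equiv 0$: any harmonic function on $B_1$ with vanishing $L^2$-averages of $v,\nabla v,D^2 v$ (for instance a cubic harmonic polynomial) survives. The rescaled obstacle inequalities constrain only the difference $v^1_0-v^2_0$ and do not pin down either function individually. Your proposed mechanism for $\mathscr{L}=\overline{\mathscr{L}^0}$ --- passing balls $B_\rho(y)\subset\mathscr{L}^0$ to the blow-up --- is also not how the hypothesis enters: since $0\in\partial\mathscr{L}$, the sets $r_k^{-1}\mathscr{L}^0$ need not fill any fixed portion of $B_1$. (A smaller point: with the paper's definition of $\Pi$ one has $(v^i_k)_1=-\alpha_n r_k^2(\Delta u^i)_{r_k}/S(r_k)\neq 0$; bounding this already requires the observation below.)

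The paper repairs this with two ingredients you are missing. First, $\mathscr{L}=\overline{\mathscr{L}^0}$ is used once, at the outset, to deduce $\Delta\varphi^1(0)=\Delta\varphi^2(0)=0$ (from $\Delta(\varphi^1+\varphi^2)\equiv 0$ on $\mathscr{L}^0$ together with $\Delta\varphi^i>0$ on $\Omega_i$); then $\lvert\Delta\varphi^i(r_kx)\rvert\le Cr_k^\alpha\lvert x\rvert^\alpha$ gives $\Delta v^i_k=\text{const}_k+o(1)$ in $L^2$ directly, with no geometric limit needed. Second --- the key missing idea --- the sequence is chosen \emph{maximally}: $S(r_k)=kr_k^{2+\alpha}$ and $S(r)\le kr^{2+\alpha}$ for all $r\ge r_k$. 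This feeds a dyadic iteration on $\lvert A^i_{2^lr_k}-A^i_{2^{l-1}r_k}\rvert$ yielding $\lVert D^2 v^i_k\rVert_{L^2(B_R)}\le C R^{n/2+\alpha}$ for every $R<1/(2r_k)$. The limit $v^i_0$ is then defined on all of $\mathbb{R}^n$, satisfies $\Delta v^i_0\equiv a^i$, and inherits $\int_{B_R}\lvert D^2 v^i_0\rvert^2\le CR^{n+2\alpha}$. A Liouville argument (gradient estimates for the harmonic entries of $D^2 v^i_0$, using $\alpha<1$) now forces $D^2 v^i_0$ constant, so $v^i_0$ is a second-order polynomial and orthogonality gives $v^i_0\equiv 0$. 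The $B_R$ framework is also what makes your final step go through: interior Calder\'on--Zygmund upgrades convergence on $B_1$ only from data on a larger ball, so without the extension to $B_2$ you cannot conclude $\lVert D^2 v^i_k\rVert_{L^2(B_1)}\to 0$ from $\Delta v^i_k\to 0$ on $B_1$ alone.
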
 

\begin{proof}
Let us start with an important observation: The assumptions $ 0\in \partial \mathscr{L} \cap \partial \Omega_1 \cap
\partial \Omega_2$,
$\mathscr{L}=\{ \varphi^1+\varphi^2 =0 \}$, $\mathscr{ L}= \overline{\mathscr{ L}^0}$ and 
$ \varphi^1, \varphi^2 \in C^{2, \alpha}$ imply that $ \Delta \varphi^1(0)+\Delta \varphi^2(0)=0 $. 
On the other hand $ \Delta \varphi^1>0 $ in $\Omega_1$ and 
$ \Delta \varphi^2>0 $ in $\Omega_2$, therefore $ \Delta \varphi^1(0) =
\Delta \varphi^2(0)=0$.

\par
Next we show that  $\varphi^i \in C^{2, \alpha}$ together with $\Delta \varphi^i(0)=0$, provide the growth 
estimate \eqref{main}. The proof is based on an argument of contradiction, assume that $\frac{S(r)}{r^{2+\alpha}}$
is not bounded, 
then there exists a sequence
$r_k\rightarrow 0$ as $ k\rightarrow\infty $, such that $S(r_k)=kr_k^{2+\alpha}$ and $S(r)\leq kr^{2+\alpha}$ for
all $r \geq  r_k$. Our aim is to study the convergence of the sequence $ v^i_{k}:=v^i_{r_k}$ as $k\rightarrow\infty$.
For that we will need some basic properties of the functions $v^i_r$, where $ 0<r<1$.

\par
According to the definition of $S(r)$, $ \Arrowvert D^2 v^i_r \rVert_{L^2(B_1)}\leq1$ for $r<1$.
Then applying Poincare's inequality for  the function $\nabla v^i_r$ in the unit ball $B_1$,
we get $ \Arrowvert \nabla v^i_r \rVert_{L^2(B_1)}\leq C_n$ for every $0<r<1$, since
 $(\nabla v^i_r)_1= 0$. Next we study the average of $v^i_r$ in the unit ball
 \begin{align*}
(v^i_r)_1=\frac{(u^i)_{r}-\frac{n}{2} r^2 \cdot tr(D^2 u^i)_{r} \fint_{B_1}x_1^2 dx -
r \fint_{B_1} x \cdot (\nabla u^i)_{r}dx
-(u^i)_{r}}{S(r)} \\
= -\alpha_n \frac{r^2 \left( \Delta u^i (r x) \right)_1}{S(r)},
\end{align*}
where $\alpha_n= \frac{n}{2}\fint_{B_1}x_1^2 dx$ is a dimensional constant.
Now let us recall that $ (u^1, u^2)$ solves \eqref{system},
and therefore 
\begin{align*}
0 \leq -\Delta u^i(r x) \leq \max (0, \Delta \varphi^i (r x))\leq 
\Arrowvert \varphi^i \rVert_{C^{2,\alpha}} r^\alpha \lvert x \rvert^\alpha, 
\end{align*}
since $ \Delta \varphi^i (0) =0$, $\varphi^i \in C^{2, \alpha}$. 
Hence we get $ 0\leq (v^i_r)_1 \leq C_n \frac{r^{2+\alpha}}{S(r)} \Arrowvert \varphi^i \rVert_{C^{2,\alpha}}$,
where $C_n>0$ is a dimensional constant.
Next we apply Poincare's inequality one more time,
$ \Arrowvert v^i_r- (v^i_r)_1 \rVert_{L^2(B_1)} \leq C_n \Arrowvert \nabla v^i_r \rVert_{L^2(B_1)}$.
 Therefore  we may conclude that  
\begin{equation} \label{w22}
 \Arrowvert \nabla v^i_r \rVert_{L^{2}(B_1)}\leq C,
 \Arrowvert v^i_r \rVert_{L^{2}(B_1)}\leq C\left(1+  
 \lVert \varphi \rVert_{C^{2,\alpha}}\frac{r^{2+\alpha}}{S(r)}\right),
\end{equation}
for every $ 0<r<1$, where the constant $C>0$ depends only on the dimension.

\par
Next by using \eqref{w22} we want to estimate the  $\Arrowvert v^i_k \rVert_{W^{2,2}(B_R)} $ 
for $ R < 1/r_k $ as $k\rightarrow\infty$.
Let us start by looking at the expressions $ \lvert A^i_{2^lr_k}-A^i_{2^{l-1}r_k} \rvert$,
where $l\in \mathbb{N}$ and $r_k<1$ are chosen such that $s=r_k 2^{l-1}\leq \frac{1}{4}$.
It follows from Minkowski's inequality, that
 \begin{align*}
 \lvert A^i_{2s}-A^i_s \rvert \leq {\left( \fint _{B_1} |D^2u^i(xs)-2A^i_s|^2\right)}^{\frac{1}{2}}+
   \left(\fint _{B_1} |D^2u^i(xs)-2A^i_{2s}|^2 \right)^{\frac{1}{2}}\\
 \leq \frac{S(s)}{s^2}+2^{\frac{n}{2}} \left(\fint _{B_{\frac{1}{2}}} |D^2u^i(2xs)-2A^i_{2s}|^2 \right)^{\frac{1}{2}} 
\leq \frac{S(s)}{s^2}+2^{\frac{n}{2}}\frac{S(2s)}{4s^2}.
 \end{align*}
Hence
\begin{equation*}
 \lvert A^i_{2^lr_k}-A^i_{2^{l-1}r_k} \rvert \leq  k(1+2^{\frac{n}{2}+\alpha})(r_k 2^{l-1})^\alpha,
\end{equation*}
provided $r_k 2^{l-1} \leq \frac{1}{4}$.

\par
Now let us take any $m \in \mathbb{ N }$ such that $ 2^{m+1} r_k \leq 1$, then
\begin{align*}
  \left(\int _{B_{2^m}}|D^2 v^i_k(x)|^2dx \right)^{\frac{1}{2}}=\frac{r_k^2}{S(r_k)}
  \left(\int _{B_{2^m}}|D^2 u^i(r_k x)-2A^i_{r_k}|^2dx \right)^{\frac{1}{2}}\\  
  \leq \frac{ 2^{\frac{mn}{2}}}{k r_k^{\alpha}}
  \left(\int _{B_1}|D^2 u^i(2^m r_k x)-2A^i_{r_k}|^2dx \right)^{\frac{1}{2}} \\
   \leq \frac{ 2^{\frac{mn}{2}}}{k r_k^{\alpha}}
  \left(   \left(\int _{B_1}|D^2 u^i(2^m r_k x)-2A^i_{2^m r_k}|^2dx \right)^{\frac{1}{2}}+
  \lvert A^i_{2^mr_k}-A^i_{r_k} \rvert \right)\\
    \leq \frac{ 2^{\frac{mn}{2}}}{k r_k^{\alpha}}
    \left(  \frac{S(2^m r_k)}{(2^m r_k)^2} +\sum_{j=1}^{m}\lvert A^i_{2^jr_k}-A^i_{2^{j-1}r_k} \rvert \right)\\
\leq \frac{ 2^{\frac{mn}{2}}}{k r_k^{\alpha}} \left(k 2^{m \alpha }r_k^\alpha
+  k 2^n r_k^\alpha \sum_{j=1}^{m} 2^{\alpha (j-1)} \right) \leq
2^{n+1} 2^{m\left( \frac{n}{2}+\alpha \right)}.
 \end{align*}

 \par
For every $R<\frac{1}{2r_k}$ we can find an $m \in \mathbb{N }$ such that 
$2^{m-1}\leq R<2^m $, and then applying the estimates above, we get 
\begin{equation*}
 \int _{B_R}|D^2 v^i_k(x)|^2dx\leq C_n R^{n+2\alpha},
\end{equation*}
for every $R<\frac{1}{2r_k}$, where $C_n$ is a dimensional constant.
 Then we can also show that $ \Arrowvert \nabla v^i_{r_k} \rVert_{L^2(B_R)}$ and
 $ \Arrowvert v^i_{r_k} \rVert_{L^2(B_R)}$ are bounded by a constant depending only on $R $.
 Indeed, applying the corresponding estimates for $A^i_{r_k}$, and the first inequality in \eqref{w22}, we get
 \begin{equation*}
  \lvert b^i_{2^l r_k}- b^i_{2^{l-1}r_k}\rvert \leq C_{n,\alpha} k(r_k 2^{l})^{1+\alpha},
 \end{equation*}
and therefore
\begin{equation*}
   \lvert b^i_{R r_k}- b^i_{r_k}\rvert \leq C_{n,\alpha} k (r_k R )^{1+\alpha}.
\end{equation*}

Then the Poincare's inequality in a ball $B_R$ implies that
\begin{equation*}
 \lVert \nabla v^i_k -( \nabla v^i_k)_R \rVert_{L^2(B_R)} \leq C_n R \lVert D^2 v^i_k \rVert_{L^2(B_R)},
\end{equation*}
and
\begin{equation*}
 \lVert  v^i_k -( v^i_k)_R \rVert_{L^2(B_R)} \leq C_n R \lVert \nabla v^i_k \rVert_{L^2(B_R)},
\end{equation*}
where 
\begin{align*}
 ( \nabla v^i_k)_R= \frac{r_k}{S(r_k)} ( \nabla u^i_{r_k}-r_k A^i_{r_k} \cdot x -b^i_{r_k} )_R \\
= \frac{r_k}{S(r_k)} \left(  (\nabla u^i )_{R r_k}- b^i_{r_k}\right)
 = \frac{r_k}{k r_k^{2+\alpha}} \left(  b^i_{ R r_k}- b^i_{r_k} \right),
\end{align*}
and
\begin{align*}
 (v^i_k)_R=\frac{1}{S(r_k)} \left( c^i_{R r_k} -c^i_{r_k} +\frac{n}{2} r_k^2(\Delta u^i)_{r_k}(x_1^2)_R \right).
\end{align*}

\par
Next let us observe that the second inequality in \eqref{w22}, with the corresponding estimates for 
$A^i_r$ and $b^i_r$ imply that
\begin{equation*}
 \lvert c^i_{Rr_k}-c^i_{r_k}\rvert \leq C k(R r_k)^{2+\alpha}.
\end{equation*}

\par
Then it follows from the triangle's inequality that
\begin{equation*}
 \lVert \nabla v^i_k \rVert_{L^2(B_R)} \leq  C \left(  R^{\frac{n}{2}+1+\alpha}+
 R^{\frac{n}{2}}\frac{r_k}{k r_k^{2+\alpha}}\lvert b^i_{Rr_k}-b^i_{r_k}\rvert \right) \leq C_{n}R^{\frac{n}{2}+1+\alpha},
\end{equation*}
and also
\begin{align*}
 \lVert v^i_k\rVert_{L^2(B_R)}\leq C \left( R \lVert \nabla v^i_k \rVert_{L^2(B_R)}+
 R^{\frac{n}{2}}(v^i_k)_R \right) \leq \\
 C\left( R^{\frac{n}{2}+2+\alpha} +R^{\frac{n}{2}} \frac{\lvert c^i_{Rr_k}-c^i_{r_k}\rvert}{S(r_k)} +
  \lVert \varphi \rVert_{C^{2,\alpha}} R^{\frac{n}{2}+2} \frac{r_k^{2+\alpha}}{S(r_k)}\right)
 \leq C' R^{\frac{n}{2}+2+\alpha}.
\end{align*}

 \par
Therefore we have shown that the sequence $v^i_k$ is locally uniformly bounded in $W^{2,2}$, hence 
 through a subsequence, $v^i_k$ converges weakly in $W^{2,2}(B_R)$, and strongly in $W^{1,2}(B_R)$, denote 
 $ v^i_0= \lim_{k\rightarrow\infty}v^i_k$ for $i=1,2$.
 Then the weak convergence of the second order derivatives implies that 
 \begin{equation*}
   \int _{B_R}|D^2 v^i_0(x)|^2dx \leq \limsup_{k \rightarrow \infty} 
 \int _{B_R}|D^2 v^i_k(x)|^2dx ,
 \end{equation*}
 and therefore
 \begin{equation} \label{impest}
 \int _{B_R}|D^2 v^i_0(x)|^2dx \leq C_n R^{n+2\alpha}.
\end{equation}

\par
Next we describe further properties of the limit functions, $v^1_0$ and  $v^2_0$.
Recall that 
\begin{equation*}
 v^i_k(x)=\frac{u^i(r_k x)-\Pi(u^i(r_k x),1)}{S(r_k)},
\end{equation*}
then we have
\begin{equation*}
 -\Delta v^i_k (x)=\frac{r_k^2}{S(r_k)} \left( -\Delta u^i(r_k x)+tr A^i_{r_k} \right).
\end{equation*}
Let us denote
\begin{align*}
 q^1_k(x)=\frac{p^1_{r_k}(r_k x)-p^2_{r_k}(r_k x)+\varphi^1(r_k x)}{S(r_k)}, \textrm{ and } \\
  q^2_k(x)=\frac{p^2_{r_k}(r_k x)-p^1_{r_k}(r_k x)+\varphi^2(r_k x)}{S(r_k)}.
\end{align*}

Then $ (v^1_k, v^2_k)$ is a strong solution to the following system
\begin{equation*}
\begin{cases}
\min (-\Delta v^1_k-\frac{tr A^1_{r_k}}{k r_k^\alpha}, v^1_k-v^2_k+q^1_k)=0 \\
\min (-\Delta v^2_k-\frac{tr A^2_{r_k}}{k r_k^\alpha}, v^2_k-v^1_k+q^2_k)=0 \\
\min (-\Delta v^1_k-\frac{tr A^1_{r_k}}{k r_k^\alpha},-\Delta v^2_k-\frac{tr A^2_{r_k}}{k r_k^\alpha})=0,
\end{cases}
\end{equation*}
therefore
\begin{equation*}
 -\Delta v^1_k(x)=
 \begin{cases}
  \frac{tr A^1_{r_k}}{k r_k^\alpha}+\frac{\Delta \varphi^1(r_kx)}{k r_k^\alpha},  \textrm{    if }
  r_k x \in \Omega_1 \\
 \frac{tr A^1_{r_k}}{k r_k^\alpha},  \textrm{   otherwise },
 \end{cases}
\end{equation*}
and
\begin{equation*}
 -\Delta v^2_k(x)=
 \begin{cases}
   \frac{tr A^2_{r_k}}{k r_k^\alpha}+\frac{\Delta \varphi^2(r_kx)}{k r_k^\alpha},
   \textrm{    if } r_k x \in \Omega_2 \\
 \frac{tr A^2_{r_k}}{k r_k^\alpha},  \textrm{    otherwise }.
 \end{cases}
\end{equation*}

\par
Then $ \Delta \varphi^i(0) =0$, for $i=1,2$ 
together with $\varphi^i \in C^{2,\alpha}$, implies that 
\begin{equation*}
 \frac{\Arrowvert\Delta \varphi^i(r_k\cdot)\rVert_{L^2(B_R)}}{kr_k^\alpha} \leq 
\frac{C_n}{k} R^{\frac{n}{2}+\alpha} \Arrowvert \varphi^i \rVert_{C^{2,\alpha}} 
\rightarrow 0, \textrm{ as } k\rightarrow\infty,
\end{equation*}
for $i=1,2$, and for any fixed $1\leq R<\infty$.

\par
We have that $v^i_k \rightharpoonup v^i_0 $  weakly in $W^{2,2}(B_R)$ and $ v^i_k \rightarrow v^i_0$ 
in $W^{1,2}(B_R)$, therefore
$ \Delta v^i_k \rightharpoonup \Delta v^i_0$ weakly in $L^{2}(B_R)$, but
$ \Delta v^i_k = \frac{tr A^i_{r_k}}{k r_k^\alpha}
+\frac{\Delta \varphi^i(r_kx)}{k r_k^\alpha} \chi_{\Omega_i}(r_kx)$, and
$\Arrowvert \frac{\Delta \varphi^i(r_kx)}{k r_k^\alpha} \chi_{\Omega_i}(r_kx) \rVert_{L^2(B_R)} \rightarrow 0$.
Thus we may conclude that the sequence of numbers 
$\frac{tr A^i_{r_k}}{k r_k^\alpha}$ converges, and denote 
$ a^i:= \lim_ {k \rightarrow\infty} \frac{tr A^i_{r_k}}{k r_k^\alpha}$.
Then $ \Arrowvert \Delta v^i_k -a^i \rVert_{L^2(B_R)} \rightarrow 0 $ as $k\rightarrow\infty$ 
for every $1\leq R <\infty$. Therefore
both $ -\Delta v^1_0 -a^1 \equiv 0 $ and $ -\Delta v^2_0 -a^2 \equiv 0 $ in $\mathbb{R}^n$.

\par
We have shown that $ v^i_0(x)-a^i \frac{\lvert x \rvert^2}{2n}$ is a harmonic functions in $\mathbb{ R}^n $.
Hence the matrix $ D^2v^i_0$ has harmonic entries $ D^k v^i_0$, where $k$ is a multiindex, $ \lvert k \rvert=2$.
Next we can apply the estimates of the 
derivatives for harmonic functions and inequality \eqref{impest}, to get 
\begin{align*}
 \lvert \nabla D^k v^i_0(x_0) \rvert \leq  R^{ -\frac{n}{2}-1}\lVert D^k v^i_0 \rVert_{L^2(B_R(x_0))} \\
 \leq R^{ -\frac{n}{2}-1} \lVert  D^k v^i_0 \rVert_{L^2(B_{2R})} 
 \leq C' R^{-1+\alpha},
\end{align*}
provided $ R > \lvert x_0\rvert$.
Letting $ R \rightarrow \infty$, we see that 
the derivatives of $D^k v^i_0$ are vanishing, hence $ D^2 v^i_0$ is a constant matrix, and therefore
$v^i_0$, $i \in \{1,2\} $ is a second order polynomial.

\par
According to our construction, 
$v^i_0$  are  orthogonal to the second order polynomials in $L^2(B_1)$-sense, hence
both $ v^1_0$ and $ v^2_0$ must be identically zero. Then the constants $a^1=a^2=0$, and
$ \Arrowvert \Delta v^i_k \rVert_{L^2(B_1)} \rightarrow 0$ as $k\rightarrow\infty$, the latter 
contradicts to the condition $\max_i \Arrowvert D^2v^i_k \rVert_{L^2(B_1)}=1$.
 
\end{proof}

\subsection{$C^{2, \alpha}$- regularity at the meeting points}

We start by showing that the approximating polynomials $p^i_r$ 
converge to a polynomial $p^i_0$, and describe the rate of convergence.

\begin{lemma}
Let $(u^1, u^2)$ be a solution to \eqref{system}, and assume that $\varphi^i \in C^{2, \alpha}$.
Let the polynomials $p^i_r$ be as in the Definition 3, then there exists a polynomial $p^i_0$ such that 
\begin{equation} \label{pol}
 \sup _{x \in B_r}\lvert p^i_r(x)-p^i_0(x)\rvert \leq C r^{2+\alpha}.
\end{equation}
\end{lemma}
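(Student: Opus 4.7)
The plan is to show that the coefficients $A^i_r$, $b^i_r$, $c^i_r$ defining $p^i_r(x) = x\cdot A^i_r\cdot x + b^i_r \cdot x + c^i_r$ form Cauchy families as $r \to 0$, then define $p^i_0$ via their limits, and finally sum the dyadic increments to obtain the rate $r^{2+\alpha}$ on $B_r$. All the required ingredients have already been worked out inside the proof of Proposition 2; the content of this lemma is essentially to repackage them under the improved bound $S(r) \leq C r^{2+\alpha}$ that Proposition 2 now guarantees.

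First I would revisit the calculation
\[
|A^i_{2s} - A^i_s| \leq \frac{S(s)}{s^2} + 2^{n/2}\frac{S(2s)}{4s^2}
\]
from the proof of Proposition 2. Plugging in $S(r) \leq C r^{2+\alpha}$ yields $|A^i_{2s} - A^i_s| \leq C s^{\alpha}$. Summing the geometric series $\sum_{l \geq 0} |A^i_{2^{l+1}r} - A^i_{2^l r}|$ along a dyadic sequence shows the sequence $\{A^i_{2^{-k}}\}$ is Cauchy, hence converges to a matrix $A^i_0$, and moreover
\[
|A^i_r - A^i_0| \leq C r^{\alpha}.
\]
The analogous steps for $b^i_r$ and $c^i_r$ were already indicated in the proof of Proposition 2: one has $|b^i_{2s} - b^i_s| \leq C s^{1+\alpha}$ and $|c^i_{2s} - c^i_s| \leq C s^{2+\alpha}$, so summing gives limits $b^i_0$ and $c^i_0$ with $|b^i_r - b^i_0| \leq C r^{1+\alpha}$ and $|c^i_r - c^i_0| \leq C r^{2+\alpha}$.

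Define $p^i_0(x) = x \cdot A^i_0 \cdot x + b^i_0 \cdot x + c^i_0$. Then on the ball $B_r$, the estimate
\[
\sup_{x \in B_r} |p^i_r(x) - p^i_0(x)| \leq |A^i_r - A^i_0|\, r^2 + |b^i_r - b^i_0|\, r + |c^i_r - c^i_0|
\]
combined with the three decay rates above produces the desired bound $C r^{2+\alpha}$.

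The only mildly delicate point is to make sure the dyadic argument is not circular: the sums $|A^i_{2^l r} - A^i_{r}|$ that appeared in the proof of Proposition 2 were used to control growth at large scales $R < 1/(2r_k)$, whereas here I use the same increment estimate but sum \emph{inward} toward $r = 0$, which requires only the now-established conclusion $S(r) \leq C r^{2+\alpha}$ for $r$ small. Hence the convergence of $p^i_r$ and the rate drop out directly, and there is no substantial obstacle beyond bookkeeping.
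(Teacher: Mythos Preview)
Your proposal is correct and follows essentially the same dyadic-increment argument as the paper: both use the bound $S(r)\le Cr^{2+\alpha}$ from Proposition~3 to control $|A^i_{r}-A^i_{r/2}|\le Cr^{\alpha}$, $|b^i_{r}-b^i_{r/2}|\le Cr^{1+\alpha}$, $|c^i_{r}-c^i_{r/2}|\le Cr^{2+\alpha}$, and then sum geometrically to produce the limiting polynomial and the rate. The only cosmetic difference is that the paper explicitly identifies the limits $b^i_0=\nabla u^i(0)$ and $c^i_0=u^i(0)$ (using $b^i_r=(\nabla u^i)_r$, $c^i_r=(u^i)_r$ and continuity of $u^i$), whereas you leave them as abstract limits---which is all the stated estimate requires.
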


\begin{proof}
The condition $\Arrowvert D^2v^i_r \rVert _{L^2(B_1)}\leq 1$ with the inequality 
\eqref{main} implies that 
\begin{equation} \label{1}
 \left( \int_{B_1} \lvert D^2u^i(rx)-D^2p^i_r \rvert ^2 dx \right) ^{\frac{1}{2}} \leq  \frac{S(r)}{r^2}
 \leq C_0 r^{\alpha}
\end{equation}

Recall that $A^i_r=D^2p^i_r$, then using the triangle inequality, and that $A^i_r$ is minimizing 
$ \Arrowvert D^2 u^i(rx) - A \rVert_{L^2(B_1)}$ over matricies $A \in 
\mathbb{R}^n  \times \mathbb{R}^n$, we get 
$ \lvert A^i_r-A^i_{\frac{r}{2}} \rvert \leq C_0 r^{\alpha}$ for all $0<r<1$. 
By taking $r=2^{-n}$, we see that $A^i_{2^{-n}}$ is a Cauchy sequence;
\begin{equation*}
 \lvert A^i_{2^{-n}} - A^i_{2^{-n-m}}\rvert \leq  \Sigma_{k=0}^{m-1} 
 \lvert A^i_{2^{-n-k}} - A^i_{2^{-n-k-1}}\rvert \leq
\end{equation*}
\begin{equation*}
  \Sigma_{k=0}^{m-1}(2^{-\alpha})^{n+k}= 2^{-\alpha n} \Sigma_{k=0}^{m-1}2^{-\alpha k},
\end{equation*}
from the convergence of the series $ \Sigma(2^{-\alpha})^{n}$, it follows that 
$A^i_{2^{-n}}$ converges to some matrix $A^i_0$ as $n$ goes to infinity.
The inequality also provides the rate of convergence; for a fixed $n$, by letting $m$
go to infinity, we see that $ \lvert A^i_{2^{-n}} - A^i_0 \rvert \leq C_0 2^{-n\alpha}$.

\par
Moreover, we get the estimate 
\begin{equation*}
 \lvert A^i_{r} - A^i_0 \rvert \leq C r^{\alpha},
\end{equation*}
for $0<r < 1$, by choosing $n$ so that $2^{-n-1} < r \leq 2^{-n}$.

\par
Next we proceed to describe the rate of convergence of $b^i_r$ and $c^i_r$.
We know that $ b^i_r=(\nabla u ^i )_r$, and $c^i_r=(u^i)_r$, taking into account that $u^i \in C^{1, \gamma}$,
we see that $ b^i_r \rightarrow \nabla u^i(0)$  and $c^i_r\rightarrow u(0)$ as $r \rightarrow 0$. 
Our aim is to show that
actually
\begin{equation*}
\lvert b^i_{r} - \nabla u^i(0) \rvert \leq C r^{1+\alpha}  \textrm{  and }
\lvert c^i_r- u^i(0) \rvert \leq C r^{2+\alpha}.
\end{equation*}

\par
Let us recall that $ (\nabla v^i_r)_1= 0 $, and therefore Poincare's inequality implies that 
\begin{align*}
\Arrowvert \nabla v^i_r \rVert _{L^2(B_1)} \leq C \Arrowvert D^2 v^i_r \rVert _{L^2(B_1)}\leq C'.
\end{align*}
Hence
\begin{equation} \label{2}
 \left( \int_{B_1} \lvert \nabla u^i(rx)-\nabla p^i_r(rx) \rvert ^2 dx \right) ^{\frac{1}{2}} \leq 
   C' \frac{S(r)}{r}
 \leq C r^{1+\alpha}.
\end{equation}

\par
Taking into account that $b^i_r$ is minimizing $ \lVert \nabla u^i(rx)- r x \cdot A^i_r -b \rVert_{L^2(B_1)}$ over 
$ b \in \mathbb{R}^n$, and applying triangle's inequality we get 
$ \lvert b^i_{r} - \nabla u^i(0) \rvert \leq C r^{1+\alpha}$.

\par
Furthermore, using Poincare's inequality once again, we see that
\begin{equation} \label{3}
 \left( \int_{B_1} \lvert u^i(rx)- p^i_r(rx)+r^2 (\Delta u^i)_r \rvert ^2 dx \right) ^{\frac{1}{2}} \leq 
   C' S(r)
 \leq C r^{2+\alpha},
\end{equation}
then 
\begin{equation*}
\lvert c^i_{r} - u^i(0) \rvert \leq C r^{2+\alpha},
\end{equation*}
by using that $c^i_r$
is minimizing $ \Arrowvert u^i(rx)+ r^2(\Delta u^i)_r-\frac{1}{2} r^2 x \cdot A^i_r \cdot x -
r b^i_r \cdot x -c \rVert_{L^2(B_1)}$ over $c \in \mathbb{ R}$.

\par
Finally, after combining our estimates for $ A^i_r, b^i_r \textrm{ and } c^i_r $, we get \eqref{pol},
where 
\begin{equation*}
p^i_0(x)= \frac{1}{2} x \cdot A^i_0 \cdot x + \nabla u^i(0) \cdot x+ u^i(0),
\end{equation*}
for $i= 1,2$.

\end{proof}

\begin{corollary}
Under the assumptions of Lemma 2 it follows that
\begin{equation}
\Arrowvert u^i(rx)-p^i_r(rx) \rVert_{W^{2,2}(B_1)} \leq C_{n,\alpha}
\left( \max_i \Arrowvert \varphi^i \rVert_{C^{2,\alpha}}+ \max_i \Arrowvert u^i \rVert_{L^2}\right)r^{2+\alpha},
\end{equation}
where $ C_{n,\alpha}$ is a dimensional constant.
\end{corollary}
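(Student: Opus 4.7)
The plan is to derive the estimate directly from the identity
\[
u^i(rx)-p^i_r(rx) \;=\; S(r)\, v^i_r(x),
\]
which is immediate from Definition 3, combined with the uniform $W^{2,2}$-bounds on $v^i_r$ already established inside the proof of Proposition 2, and the growth estimate $S(r)\le C r^{2+\alpha}$ from Proposition 2 itself.

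Concretely, I would split the $W^{2,2}$-norm of $u^i(rx)-p^i_r(rx)$ on $B_1$ into the three standard pieces and scale each through the identity above. For the Hessian, by construction of $S(r)$ one has $\|D^2 v^i_r\|_{L^2(B_1)}\le 1$, so $\|D^2_x(u^i(r\cdot)-p^i_r(r\cdot))\|_{L^2(B_1)}\le S(r)$. For the gradient, Poincar\'e applied to $\nabla v^i_r$ (using $(\nabla v^i_r)_1=0$, as in the proof of Proposition 2) gives $\|\nabla v^i_r\|_{L^2(B_1)}\le C_n$, hence $\|\nabla_x(u^i(r\cdot)-p^i_r(r\cdot))\|_{L^2(B_1)}\le C_n S(r)$. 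For the function itself, the computation of $(v^i_r)_1$ in the proof of Proposition 2 yields
\[
\lvert (v^i_r)_1\rvert \;\le\; C_n\,\lVert\varphi^i\rVert_{C^{2,\alpha}}\,\frac{r^{2+\alpha}}{S(r)},
\]
and then Poincar\'e gives $\|v^i_r\|_{L^2(B_1)}\le C\bigl(1+\lVert\varphi\rVert_{C^{2,\alpha}} r^{2+\alpha}/S(r)\bigr)$, so that
\[
\lVert u^i(r\cdot)-p^i_r(r\cdot)\rVert_{L^2(B_1)} \;\le\; C\,S(r) + C\,\lVert\varphi\rVert_{C^{2,\alpha}}\,r^{2+\alpha}.
\]

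Putting the three pieces together,
\[
\lVert u^i(rx)-p^i_r(rx)\rVert_{W^{2,2}(B_1)} \;\le\; C\bigl(S(r) + \lVert\varphi\rVert_{C^{2,\alpha}}\,r^{2+\alpha}\bigr).
\]
Finally I would invoke Proposition 2 to replace $S(r)$ by its $r^{2+\alpha}$-growth estimate, bounding $\max_i\|\Delta\varphi^i\|_{L^\infty}$ by $\max_i\|\varphi^i\|_{C^{2,\alpha}}$, which collapses both terms on the right into $C_{n,\alpha}\bigl(\max_i\lVert\varphi^i\rVert_{C^{2,\alpha}}+\max_i\lVert u^i\rVert_{L^2}\bigr)r^{2+\alpha}$, as required.

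There is essentially no obstacle here, since all the hard work lies in Proposition 2 and the auxiliary bounds inside its proof; the only bookkeeping issue is matching the derivatives under the scaling $x\mapsto rx$ (the chain rule contributes factors of $r$ and $r^2$ that exactly cancel against the prefactor $S(r)/r^2$ of the $BMO$-type estimate in \eqref{1}), and ensuring that the $L^2$-term uses the estimate on $(v^i_r)_1$ rather than the $BMO$ scaling alone.
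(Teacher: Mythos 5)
Your argument is correct and is essentially the paper's own proof: the paper simply cites the three estimates \eqref{1}, \eqref{2}, \eqref{3} (which are exactly your Hessian-normalization, gradient-Poincar\'e, and function-Poincar\'e bounds for $v^i_r$, rescaled by $S(r)$) together with $\lVert \Delta u^i(rx)\rVert_{L^2(B_1)}\leq C\lVert \varphi^i\rVert_{C^{2,\alpha}}r^{\alpha}$, and then invokes $S(r)\leq C_0 r^{2+\alpha}$ from Proposition 2, just as you do. Your handling of the zero-order term via $(v^i_r)_1$ is the same computation that the paper packages into \eqref{3} through the shift $r^2(\Delta u^i)_r$.
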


\begin{proof}
Let us recall that $ \Arrowvert \Delta u^i(rx)\rVert_{L^2(B_1)} \leq C \Arrowvert \Delta \varphi^i \rVert_{C^\alpha}r^\alpha$,
then the statement follows from the inequalities \eqref{1}, \eqref{2} and \eqref{3}.
\end{proof}

\par
Now we are ready to prove the main theorem.
\begin{theorem}
 Assume $\varphi^1, \varphi^2 \in C^{2,\alpha }$, and $\mathscr{L}=\overline{\mathscr{L}^0}$
 then the solution to the system \eqref{system}, $(u^1, u^2)$
 is $C^{2, \alpha}$-regular on $ \partial \Omega_1 \cap \partial \Omega_2 \cap \partial \mathscr{L} \cap \Omega$,
 in the sense that for every $x_0 \in \partial \Omega_1 \cap \partial \Omega_2 \cap \partial \mathscr{L}\cap 
 \Omega$,
 there exist second order polynomials $ p^1_{x_0} $, $ p^2_{x_0} $,  such that
 \begin{equation}
 \sup _{x \in B_r(x_0)}\lvert u^i(x)-p^i_{x_0}(x) \rvert \leq C r^{2+\alpha}
\end{equation}
 where the constant $C>0$ depends only on the given data.
\end{theorem}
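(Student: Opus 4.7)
The plan is to combine the polynomial convergence from Lemma 2 with an elliptic $L^\infty$-upgrade of the integral bound provided by the Corollary. After translating so that $x_0 = 0$, Lemma 2 supplies the limit polynomial $p^i_0$ together with the estimate $\sup_{B_r}\lvert p^i_r - p^i_0\rvert \leq Cr^{2+\alpha}$, so by the triangle inequality it suffices to prove
\begin{equation*}
\sup_{B_{r/2}}\lvert u^i - p^i_r\rvert \leq Cr^{2+\alpha}
\end{equation*}
for all sufficiently small $r$.

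First I would introduce the rescaled error $w^i_r(x) := (u^i(rx) - p^i_r(rx))/r^{2+\alpha}$ on $B_1$. The Corollary immediately gives $\lVert w^i_r \rVert_{W^{2,2}(B_1)} \leq C$ uniformly in $r$, so the task reduces to upgrading this to a uniform $L^\infty$-bound. The first input is a pointwise control on $\Delta w^i_r$: since $0 \in \partial\Omega_1 \cap \partial\Omega_2 \cap \partial\mathscr{L}$ and $\mathscr{L} = \overline{\mathscr{L}^0}$, the observation at the beginning of the proof of Proposition 2 forces $\Delta\varphi^i(0) = 0$. Using $\varphi^i \in C^{2,\alpha}$ and the system \eqref{system}, this yields $\lvert \Delta u^i(rx)\rvert \leq C(r\lvert x\rvert)^\alpha$ a.e.\ and $\lvert \Delta p^i_r\rvert = \lvert(\Delta u^i)_r\rvert \leq Cr^\alpha$. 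Dividing through by $r^\alpha$ we obtain $\lVert \Delta w^i_r \rVert_{L^\infty(B_1)} \leq C$ uniformly in $r$.

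With $w^i_r$ uniformly bounded in $W^{2,2}(B_1)$ and $\Delta w^i_r$ uniformly bounded in $L^\infty(B_1)$, I would perform the standard splitting $w^i_r = v_r + h_r$, where $v_r \in W^{1,2}_0(B_1)$ solves $\Delta v_r = \Delta w^i_r$ and $h_r$ is harmonic in $B_1$. Comparing $v_r$ with the paraboloid $\tfrac{C}{2n}(1-\lvert x\rvert^2)$ via the maximum principle gives $\lVert v_r\rVert_{L^\infty(B_1)} \leq C$, and the resulting $L^2$-bound on $h_r$ together with the mean value property for harmonic functions yields $\lVert h_r\rVert_{L^\infty(B_{1/2})} \leq C$. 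Unrescaling produces $\sup_{B_{r/2}}\lvert u^i - p^i_r\rvert \leq Cr^{2+\alpha}$, and combining this with Lemma 2 completes the proof.

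The main technical point is the vanishing $\Delta\varphi^i(0) = 0$: this is what keeps the renormalized Laplacian $\Delta w^i_r$ bounded as $r \to 0$. Without the hypothesis $\mathscr{L} = \overline{\mathscr{L}^0}$ the origin could be an isolated zero of $\varphi^1 + \varphi^2$, this vanishing would fail, and the method would break down, consistent with the counterexample promised in the introduction.
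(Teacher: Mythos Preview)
Your argument is correct and follows essentially the same route as the paper's proof: rescale the error, use $\Delta\varphi^i(0)=0$ (forced by $\mathscr{L}=\overline{\mathscr{L}^0}$) to obtain a uniform $L^\infty$-bound on the rescaled Laplacian, and then upgrade the $W^{2,2}$-bound from the Corollary to an $L^\infty$-bound. The only cosmetic differences are that the paper rescales around the limit polynomial $p^i_0$ rather than $p^i_r$ (and so must first argue that $\mathrm{tr}\,A^i_0=0$), and it invokes the Calderon--Zygmund estimate instead of your harmonic/Dirichlet splitting for the $L^\infty$-upgrade.
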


\begin{proof}
Without loss of generality, we may assume $x_0=0$,
and consider the following rescalings
\begin{equation*}
 v^i_r(x)= \frac{u^i(rx) -p^i_0(rx)}{r^{2+\alpha}}, 
\end{equation*}
then according to Lemma 2 and Corollary 2, $ \Arrowvert v^i_r \rVert_ {W^ {2,2}(B_1)} \leq C$.

\par
The pair $(v^1_r, v^2_r)$ solves the following system

\begin{equation*}
\begin{cases}
\min (-\Delta v^1_r-\frac{tr A^1_0}{r^\alpha}, v^1_r-v^2_r+q^1_r)=0 \\
\min (-\Delta v^2_r- \frac{tr A^2_0}{r^\alpha}, v^2_r-v^1_r+q^2_r)=0 \\
\min (-\Delta v^1_r-\frac{tr A^1_0}{r^\alpha}, -\Delta v^2_r- \frac{tr A^2_0}{r^\alpha} )=0,
\end{cases}
\end{equation*}

where
\begin{equation*}
 q^1_r(x)= \frac{p^1_0(rx)-p^2_0(rx) +\varphi^1(rx)}{r^{2+\alpha}},
 q^2_r(x)= \frac{p^2_0(rx)-p^1_0(rx) +\varphi^2(rx)}{r^{2+\alpha}}.
\end{equation*}

Then
\begin{equation*}
 -\Delta v^i_r=\begin{cases}
    \frac{tr A^i_0}{r^\alpha}+ \frac{\Delta \varphi^i(rx)}{r^\alpha}  \textrm{    if  } rx \in \Omega_i  \\
    \frac{tr A^i_0}{r^\alpha} \textrm{    otherwise.  }
   \end{cases}
\end{equation*}

\par
We assumed that $0 \in \partial \Omega_1 \cap \partial \Omega_2 \cap \partial \mathscr{L}$, then 
$ \Delta \varphi^i (0)=0$, $i=1,2$. Hence 
$ \lvert \frac{\Delta \varphi^1(rx)}{r^\alpha}\rvert \leq
\Arrowvert \varphi^1_r \rVert_{C^{2,\alpha}(B_1)} \lvert x \rvert^\alpha$.
We know that $ \Arrowvert \Delta v^i_r\rVert_{L^2(B_1)}$
is bounded, therefore $tr A^i_0=0$, and $ \Delta v^i_r (x)$ is uniformly bounded.

\par
We have that $ \Arrowvert v^i_r \rVert_{L^2(B_1)} \leq C$
and we saw that $ \Arrowvert \Delta v^i_r \rVert_{L^\infty(B_1)} \leq 
\Arrowvert \varphi^i \rVert_{C^{2,\alpha}(B_1)}$. 
Using the Calderon-Zygmund estimates, we conclude that 
$ \Arrowvert v^i_r \rVert_{C^{1,\gamma}}$
is uniformly bounded. In particular,  $ \lvert v^i_r(x) \rvert \leq 
C'(C_0+\Arrowvert \varphi^i \rVert_{C^{2,\alpha}(B_1)} ) $ for every
$ x \in B_1$ and $r\leq \frac{1}{2} $. 

\par
Recall that we set  $C_0=C \left(\max_ i \Arrowvert \Delta \varphi^i \rVert_{L^\infty}+
\max_i \Arrowvert u^i \rVert_{L^2(B_1)} \right) $, and $ v^i_r(x)= \frac{u^i(rx) -p^i_0(rx)}{r^{2+\alpha}} $.
Then we get the desired inequality
 \begin{equation*}
 \sup _{x \in B_r}\lvert u^i(x)-p^i_0(x) \rvert \leq C
 \left(\max_{i \in \{1,2\}} \Arrowvert u^i \rVert_{L^2(B_1)} +\max_{i\in \{1,2\}}\Arrowvert \varphi^i \rVert_{C^{2,\alpha}(B_1)} \right)
 r^{2+\alpha}.
\end{equation*}
\end{proof}

\subsection{A counterexample in case the zero-loop set has an isolated point}
Here we give a counterexample, showing that if the zero loop set has an isolated point,
then the solution may not be $C^{1,1}$.

\par
We consider the following system in $\mathbb{R}^2$
\begin{equation} \label{countex}
\begin{cases}
\min (-\Delta u^1, u^1-u^2+\varphi)=0 \\
\min (-\Delta u^2, u^2-u^1+\varphi)=0,
\end{cases}
\end{equation}
with $ \varphi= \frac{1}{4} \lvert x \rvert ^2 $.

\par
Then the difference $U=u^1-u^2$ solves the following double-obstacle problem 
in $\mathbb{ R }^2$

\begin{equation} \label{do}
-\Delta U = \begin{cases}
1, \textrm{   if } U=-\varphi \\
-1, \textrm{   if }  U= \varphi \\
0 , \textrm{   if }  -\varphi< U< \varphi,
\end{cases}
\end{equation}
and $-\Delta u^1 =(-\Delta U )^+$ and $ -\Delta u^2 =(\Delta U )^+ $.

\par
Now let us consider a function $ w $ defined as follows
\begin{equation*}
w = \begin{cases}
- \frac{1}{4} \lvert x \rvert^2 , \textrm{   if } x_1> 0, x_2>0 \\
\frac{1}{4} (x_1^2-x_2^2), \textrm{   if }  x_1<0, x_2 >0 \\
\frac{1}{4} (x_2^2-x_1^2), \textrm{   if }  x_1>0, x_2 <0 \\ 
 \frac{1}{4} \lvert x \rvert^2 , \textrm{   if } x_1< 0, x_2<0.
\end{cases}
\end{equation*}
Then $w \in C^{1,1}$ also solves the double-obstacle problem \eqref{do}, therefore we 
choose $ U \equiv w $. 

\par
Next we write $u^1$ explicitly in polar coordinates
\begin{equation*}
u^1(r,\theta) =
\begin{cases}
 -\frac{1}{4}r^2-\frac{1}{2 \pi } r^2 \theta \cos 2\theta -\frac{1}{2 \pi} r^2 \ln r \sin 2\theta,
 \textrm{   if } 0<\theta \leq \frac{\pi }{2}\\
-\frac{1}{4}r^2 \cos 2 \theta +\frac{1}{2 \pi } r^2 \theta \cos 2\theta +\frac{1}{2 \pi} r^2 \ln r \sin 2\theta, \textrm{   otherwise  }, 
\end{cases}
\end{equation*}
here the function   $ r^2 \theta \cos 2\theta + r^2 \ln r \sin 2\theta \in C^{1, \gamma} $   
for every $0<\gamma<1$, solves the Laplace equation in $ \mathbb{ R }^2 \backslash \{0\}$,
but is not $C^{1,1}$ near the origin.

\par
Therefore $u^1$ is a $C^{1,\gamma}$ function in the unit ball in $\mathbb{ R }^2$
but it is not $C^{1,1}$
in the neighborhood of the origin, since $ \lvert \frac{\partial^2 u^1}{\partial r^2}\rvert 
\approx \lvert \ln r \rvert \rightarrow\infty $ as $r\rightarrow 0$.
Moreover, 
  $ -\Delta u^1(r, \theta) = \chi _{\{0<\theta \leq \frac{\pi }{2} \}}=  \chi _{\{ u^1 > 0 \}}$,
provided $r> 0 $ is small enough.

\par
Next we take $ u^2(r, \theta )= u^1(r, \theta)-w$, then 
\begin{equation*}
u^2(r,\theta) =
\begin{cases}
-\frac{1}{2 \pi } r^2 \theta \cos 2\theta -\frac{1}{2 \pi} r^2 \ln r \sin 2\theta,
 \textrm{   if } 0<\theta \leq \frac{\pi }{2}\\
 - \frac{1}{2}r^2 \cos 2\theta+\frac{1}{2 \pi } r^2 \theta \cos 2\theta +\frac{1}{2 \pi} r^2 \ln r \sin 2\theta,
 \textrm{  if } \frac{\pi }{2} <\theta \leq \pi \\
 - \frac{1}{4}r^2- \frac{1}{4}r^2 \cos 2\theta+\frac{1}{2 \pi } r^2 \theta \cos 2\theta +\frac{1}{2 \pi} r^2 \ln r \sin 2\theta,
 \textrm{   if } \pi <\theta \leq \frac{3 \pi }{2}\\
 \frac{1}{2 \pi } r^2 \theta \cos 2\theta +\frac{1}{2 \pi} r^2 \ln r \sin 2\theta,
 \textrm{   if }\frac{3 \pi }{2} <\theta \leq 2 \pi 
\end{cases}
\end{equation*}

Neither $u^1$ nor  $u^2 $ is a $C^{1,1}$ function. However, it is easy to see that $(u^1,u^2) $
solves \eqref{countex}.

\bibliographystyle{plain}
\bibliography{switching}

\end{document}